\newcommand{\R}{\mathbb{R}}
\newcommand{\e}{\varepsilon}
\renewcommand{\epsilon}{\varepsilon}
\newcommand{\eps}{\varepsilon}
\newcolumntype{M}[1]{>{\raggedright}m{#1}}
\newtheorem{lem}{Lemma}
\newtheorem{prop}{Proposition}
\newtheorem{theo}{Theorem}
\theoremstyle{definition}
\newtheorem{rmk}{Remark}
\newtheorem{ex}{Example}
\newcommand{\fer}[1]{(\ref{#1})}
\newcommand{\commentout}[1]{}
\newcommand {\al} {\alpha}
\newcommand {\vp} {\varphi}
\newcommand {\Chi} {{\bf \raise 2pt \hbox{$\chi$}} }
\newcommand {\f}   {\frac}
\newcommand {\p}   {\partial}
\newcommand{\beq}{\begin{equation}}
\newcommand{\eeq}{\end{equation}}
\newcommand{\beqa}{\begin{eqnarray}}
\newcommand{\bea} {\left\{ \begin{array}{l}}
\newcommand{\beqan}{\begin{eqnarray*}}
\newcommand{\eeqa}{\end{eqnarray}}
\newcommand{\eeqan}{\end{eqnarray*}}
\newcommand{\eea} {\end{array} \right.}
\title{A Hamilton-Jacobi approach for a model of population structured by space and trait}
\author{Emeric Bouin \thanks{Corresponding author}\,
\thanks{UMR CNRS 5669 'UMPA' and INRIA project 'NUMED', \'Ecole Normale Sup\'erieure de Lyon, 
46, all\'ee d'Italie, 
F-69364 Lyon Cedex 07, 
France. Email : emeric.bouin@ens-lyon.fr}
\and
Sepideh Mirrahimi \thanks{
CNRS, Institut de Math\'ematiques de Toulouse UMR 5219, 31062 Toulouse, France. Email: sepideh.mirrahimi@math.univ-toulouse.fr}
\thanks{ Universit\'e de Toulouse ; UPS, INSA,  UT1, UTM ; IMT ; 31062 Toulouse, France.}
}
\date{\today}
\begin{document}

\maketitle
\begin{abstract}
We study a non-local parabolic Lotka-Volterra type equation describing a population structured by a space variable $x \in \R^d$ and a phenotypical trait  $\theta \in \Theta$. Considering diffusion, mutations and space-local competition between the individuals, we analyze the asymptotic (long--time/long--range in the $x$ variable) exponential behavior of the solutions. Using some kind of real phase WKB ansatz, we prove that the propagation of the population in space can be described by a Hamilton-Jacobi equation with obstacle which is independent of $\theta$. The effective Hamiltonian is derived from  an eigenvalue problem.
 \\
The main difficulties are  the lack of regularity estimates in the space variable, and the lack of comparison principle due to the non-local term.
\end{abstract}
\noindent{\bf Key-Words:} {Structured populations, Asymptotic analysis,  Hamilton-Jacobi equation, Spectral problem, Front propagation}\\
\noindent{\bf AMS Class. No:} {45K05, 35B25, 49L25, 92D15, 35F21.}
\pagestyle{plain}
\pagenumbering{arabic}
\section{Introduction}

It is known that the asymptotic (long-time/long-range) behavior of the solutions of some reaction-diffusion equations, as KPP type equations, can be described by level sets of  solutions of some relevant Hamilton-Jacobi equations (see \cite{WF.PS:86,LE.PS:89,GB.LE.PS:90,GB.PS:94,GB.PS:97,PS:98}). These equations, which admit traveling fronts as solutions, can be used as models in ecology to describe dynamics of a population structured by a space variable.\\
 A related, but different, method using Hamilton-Jacobi equations with constraint has been developed recently to study populations structured by a phenotypical trait (see \cite{OD.PJ.SM.BP:05,GB.BP:07,GB.BP:08,GB.SM.BP:09,AL.SM.BP:10}). This approach provides an asymptotic study of the solutions in the limit of small mutations and in long time, and shows that the asymptotic solutions concentrate on one or several Dirac masses which evolve in time.\\
 Is it possible to combine these two approaches to study populations structured at the same time by a phenotypical trait and a space variable?\\

A challenge in evolutionary ecology is to provide and to analyze models that take into account jointly the evolution and the spatial structure of a population. Most of the existing models whether concentrate on the evolution and neglect or simplify significantly the spatial structure, or deal only with the spatial dynamics of a population neglecting the impact of evolution on the dynamics. However, to describe many phenomena in ecology, as the spatial structure and the local adaptation of species  \cite{JE.DD.TC.YA.TO:07},  to understand the effect of environmental changes on a population \cite{AD.FM.IC.MK.OR:12} or to estimate the propagation speed of an invasive species \cite{BP.GB.JW.RS:06,EB.VC.NM.SM:12},  it is crucial to consider the interactions between  ecology and evolution.  We refer also to \cite{SL.MB:08} and the reference therein for general literature on the subject. 
\\

In this paper, we study a  population that is structured by a continuous phenotypical trait $\theta \in \Theta$, where $\Theta$ is a smooth and convex bounded subset of $\R^n$, and a space variable $x \in \R^d$. The individuals having a trait $\theta$ at time $t$ and position $x$ are denoted by $n(t,x,\theta)$. We assume that the population moves (in space) with a diffusion process of diffusivity $D > 0$, and that they are subject to mutations, which are also described by a diffusion term with diffusivity $\alpha > 0$. We assume that the individuals in the same position are in competition with all {other} individuals, independently of their trait, and with a constant competition rate $r$. Let us notice that the non-locality in the model comes from here.
We denote by  $r a(x,\theta) \in \mathcal{C}^2\left( \R^d \times \Theta \right)$, the growth rate of trait $\theta$ at position $x$, allowing, in this way, heterogeneity in space. The model reads

\beq\label{main1}
\begin{cases}
\p_t n = D \Delta_{xx} n + \al \Delta_{\theta\theta} n + r n (a(x,\theta)-\rho ), \qquad { (t,x,\theta)\in (0,\infty)\times \R^d \times \Theta} \medskip,\\
\displaystyle\frac{\p n}{\p \textrm{\bf{n}}} = 0  \qquad \textrm{ on } {(0,\infty)\times \R^d \times\partial \Theta},\medskip\\
n(0,x,\theta)=n^0(x,\theta),  \qquad (x,\theta)\in \R^d \times \Theta.
\end{cases}
\eeq

We assume Neumann boundary conditions in the trait variable, meaning that the available traits are given by the set $\Theta$. Moreover, the initial condition $n^0$ is given and nonnegative. The variable $\rho$ stands for the total density:
\begin{equation*}
\forall (t,x) \in \R^+ \times \R^d,\qquad  \rho(t,x)=\int_{\Theta} n(t,x,\theta)d\theta.
\end{equation*}

This is not the only way to couple the spatial and trait structures. One could also consider a dependence in $\theta$ or $x$ in the spatial diffusivity coefficient, the mutation rate or the competition rate. See for instance \cite{EB.VC.NM.SM:12} for a formal study of a model where the spatial diffusivity rate depends on the trait but the growth rate is homogeneous in space. Although, there have been some attempts to study models structured by trait and space (see for instance \cite{NC.SM:07,AA.LD.CP:12,OB.VC.NM.RV:12,EB.VC.NM.SM:12,HB.GC:12}), not many rigorous studies seem to have analyzed the dynamics of a population continuously structured  by trait and by space, with non-local interactions. However, a very recent article \cite{MA.JC.GR:13}, which studies a model close to \fer{main1} with some particular growth rate $a(x,\theta)$, proves existence of traveling wave solutions. Here, we consider a different approach where we perform an asymptotic analysis. Our objective is to generalize the methods developed recently on models structured only by a phenotypical trait \cite{GB.BP:08,GB.SM.BP:09,AL.SM.BP:10} to spatial models, to be able to use the previous results  in more general frameworks. Moreover, this approach allows us to study models with general growth rates $a$, where the speed of propagation is not necessarily constant. See also   \cite{SM:12} for another work in this direction, where the Hamilton-Jacobi approach is used to study a population model with a  discrete spatial structure. 
\\

We expect that the population described by \fer{main1} propagates in the $x$-direction and that it attains a certain distribution in $\theta$ in  the invaded parts. We seek for such behavior by performing an asymptotic analysis of the following rescaled model which corresponds to considering small diffusion in space and long time:
\beq\label{main2}
\begin{cases}
\e\p_t n_\e=\e^2 D \Delta_{xx} n_\e+\al \Delta_{\theta\theta} n_\e+r n_\e(a(x,\theta)-\rho_\e), \qquad {(t,x,\theta)\in (0,\infty)\times\R^d \times \Theta} \medskip,\\
\displaystyle\frac{\p n_\e}{\p \textrm{\bf{n}}} = 0   \qquad\text{ on } {(0,\infty)\times \R^d \times\partial \Theta} \medskip,\\
n_\e(0,x,\theta)=n_\e^0(x,\theta),  \qquad  (x,\theta)\in \R^d \times \Theta.
\end{cases}
\eeq
The purpose of this work is to derive rigorously the limit $\eps \to 0$ in \eqref{main2}. Our study is based on the usual \textit{Hopf-Cole transformation} which is used in several works on reaction-diffusion equations (as for front propagation in \cite{WF.PS:86,LE.PS:89,GB.LE.PS:90}), in the study of parabolic integro-differential equations modeling populations structured by a phenotypical trait (see e.g. \cite{OD.PJ.SM.BP:05,GB.BP:08}) and also recently in the study of the hyperbolic limit of some kinetic equations \cite{EB.VC:12}:
\begin{equation}\label{WKB}
u_{\eps} := \eps \ln n_\e, \qquad \textrm{or equivalently,} \qquad n_\e=\exp\left(\f{u_\e}{\e}\right).
\end{equation}
Thanks to standard maximum principle arguments, $n_{\eps}$ is nonnegative. The quantity $u_{\eps}$ is then well defined for all $\eps > 0$. By replacing \eqref{WKB} in \eqref{main2} we obtain
\beq\label{ue}
\begin{cases}
\p_t u_\e=\e D\Delta_{xx} u_\e+\f{\al}{\e} \Delta_{\theta\theta} u_\e+D|\nabla_x u_\e|^2+\f{\al}{\e^2}|\nabla_\theta u_\e|^2+r (a(x,\theta)-\rho_\e),  \qquad{ \left( t,x,\theta \right) \in (0,\infty)\times \R^d \times \Theta}, \medskip\\
\displaystyle\frac{\p u_\e}{\p \textrm{\bf{n}}} = 0  \qquad\text{ on }{(0,\infty)\times \R^d \times\partial \Theta},\medskip\\
u_\e(0,x,\theta)= u_\e^0(x,\theta)  \qquad  \left(x,\theta\right) \in \R^d \times \Theta.
\end{cases}
\eeq
Throughout the paper, we will use the following assumptions:
\begin{equation}\label{as:u0}
\forall \eps > 0, \quad \forall x \in \R^d, \qquad - C_1(x) \leq u_{\eps}^0 \leq C. 
\end{equation}
\beq\label{as:ini2}
\lim_{\e\to 0} u_\e^0(x,\theta)=u_0(x),\qquad \text{uniformly in $\theta\in \Theta$}.
\eeq
\beq\label{as:amax} \forall (x , \theta ) \in \R^d \times \Theta, \qquad \psi(x)=-M |x|^2 + B \leq a(x,\theta) - a_{\infty }< 0,\eeq
for some $a_{\infty} \in \R$. We also suppose the two following bounds:
\beq\label{as:ad} \Vert \nabla_\theta a(\cdot,\cdot) \Vert_{\infty} = b_{\infty}.\eeq
\beq\label{as:ini}\forall x \in \R^d,  \qquad  \rho_\e^0(x) \leq a_{\infty}.\eeq

To state our results we first need the following lemma:

\begin{lem}{{\bf (Eigenvalue problem).}}\label{eigenHJ}

For all $x \in \R^d$, there exists a unique eigenvalue $H(x)$ {corresponding to a strictly} positive eigenfunction $Q(x,\cdot) \in \mathcal{C}^0(\Theta)$ which satisfies
\begin{equation}
\label{ev}
 \begin{cases}
\displaystyle\alpha \Delta^2_{\theta\theta} Q + r a(x,\cdot) Q = H(x) Q,  \qquad  {\text{in }\Theta},\medskip\\
\displaystyle\frac{\p Q(x,\cdot)}{\p \textbf{n}} = 0  \qquad{\text{on } \partial \Theta}.
\end{cases}
\end{equation}
The eigenfunction is unique under the additional normalization assumption
\begin{equation} 
\label{normQ}
\forall x \in \R^d, \qquad \int_{\Theta} Q(x,\theta) d \theta = 1.
\end{equation}
 Moreover, $H$ and $Q$ are smooth functions.
\end{lem}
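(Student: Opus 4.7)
The statement is a classical principal eigenvalue result for a self-adjoint elliptic operator with Neumann boundary conditions on a smooth bounded domain, with smooth dependence on a parameter. My plan proceeds in three stages: existence, simplicity/positivity, and smoothness in $x$.

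First, I would fix $x \in \R^d$ and work with the quadratic form
\begin{equation*}
\mathcal{E}_x[u] = -\alpha \int_{\Theta} |\nabla_\theta u|^2 \, d\theta + r \int_{\Theta} a(x,\theta) u^2 \, d\theta
\end{equation*}
on the constraint set $\{u \in H^1(\Theta) : \|u\|_{L^2(\Theta)}=1\}$, and define $H(x) := \sup_u \mathcal{E}_x[u]$. Since $a$ is continuous and $\Theta$ is bounded, $\mathcal{E}_x$ is bounded above. A maximizing sequence is bounded in $H^1(\Theta)$; using the Rellich--Kondrachov compact embedding $H^1(\Theta) \hookrightarrow L^2(\Theta)$ (recall $\Theta$ is smooth and bounded) together with lower semicontinuity of the Dirichlet energy, I extract a weak-$H^1$ limit $Q(x,\cdot)$ that achieves the supremum. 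Writing the Euler--Lagrange equation yields \eqref{ev} in the weak sense, and elliptic regularity together with smoothness of $a$ upgrades $Q$ to a classical solution in $\mathcal{C}^2(\Theta)$ satisfying the Neumann condition.

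For positivity, I note that $|Q|$ has the same Rayleigh quotient as $Q$, hence $|Q|$ is also a maximizer and therefore satisfies \eqref{ev}. Being a nonnegative solution of a linear elliptic equation with bounded coefficients, the Hopf boundary lemma combined with the strong maximum principle (in the Neumann version, for instance as in Lions--Varadhan or via Harnack) forces $|Q|>0$ everywhere on $\overline{\Theta}$. Replacing $Q$ by $|Q|$ gives the strictly positive eigenfunction. Uniqueness up to scalar multiple is then the standard argument: if $Q_1$, $Q_2$ are both strictly positive eigenfunctions for the same $H(x)$, consider $w = Q_1 - \kappa Q_2$ where $\kappa = \min_\Theta (Q_1/Q_2)$. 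Then $w$ solves the same linear elliptic equation, is nonnegative, and vanishes at the minimizing point; the strong maximum principle yields $w \equiv 0$. The normalization \eqref{normQ} then pins down $Q$ uniquely.

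Finally, for smoothness of $H$ and $Q$ in $x$: because the principal eigenvalue of the self-adjoint operator $L_x u := \alpha \Delta_\theta u + r a(x,\theta) u$ (Neumann BC) is simple, and the map $x \mapsto a(x,\cdot)$ is $\mathcal{C}^2$ with values in $\mathcal{C}^2(\overline{\Theta})$, I would apply the implicit function theorem to the nonlinear map
\begin{equation*}
F : \R^d \times \{Q \in \mathcal{C}^{2,\gamma}(\overline{\Theta}) : \partial_\mathbf{n}Q=0\} \times \R \longrightarrow \mathcal{C}^{0,\gamma}(\overline{\Theta}) \times \R,
\end{equation*}
\begin{equation*}
F(x,Q,H) = \Bigl(\alpha \Delta_\theta Q + r a(x,\theta) Q - H Q,\ \textstyle\int_\Theta Q \, d\theta - 1\Bigr).
\end{equation*}
At a solution $(x_0,Q_0,H_0)$, the partial derivative in $(Q,H)$ is $(\varphi,h) \mapsto (\alpha\Delta\varphi + r a Q_0 \varphi - H_0 \varphi - h Q_0,\ \int \varphi)$. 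Its Fredholm index is zero, and the Fredholm alternative together with simplicity of $H_0$ (so $\ker(L_{x_0}-H_0) = \mathrm{span}(Q_0)$) and the non-degeneracy $\int Q_0 > 0$ show this operator is an isomorphism. The implicit function theorem therefore yields smooth local branches $x \mapsto (Q(x,\cdot), H(x))$, matching the globally defined principal pair by uniqueness, giving the claimed smoothness.

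The main obstacle is really the last step: verifying that the linearization is an isomorphism requires that the range condition of the Fredholm alternative is exactly compensated by the normalization constraint, which is where simplicity of the principal eigenvalue and strict positivity of $\int Q_0$ enter crucially. Everything else is standard variational and maximum-principle material.
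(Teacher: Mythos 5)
Your proof is correct in substance but follows a genuinely different route from the paper. The paper's proof is much shorter: it shifts the operator to $L(u)=-\alpha\Delta_{\theta\theta}u-r(a(x,\theta)-a_\infty)u$, observes that its resolvent with Neumann conditions is compact and strongly positive, and invokes the Krein--Rutman theorem, which delivers in one stroke the existence of a principal eigenpair, the simplicity of the eigenvalue, \emph{and} the fact that no other eigenvalue admits a positive eigenfunction; smoothness in $x$ is then simply asserted as a consequence of simplicity. You instead exploit self-adjointness: existence via maximization of the Rayleigh quotient with Rellich compactness, positivity and simplicity via the strong maximum principle and the Hopf lemma, and smoothness in $x$ via the implicit function theorem with the normalization $\int_\Theta Q=1$ closing the Fredholm alternative. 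Your route buys the variational characterization of $H(x)$ (which is in the spirit of the estimates of Lemma \ref{estH}) and makes the smoothness claim, which the paper leaves implicit, fully rigorous; the price is that it only works because the operator is self-adjoint, whereas Krein--Rutman would survive the addition of a drift term. Two small points to tidy up: (1) your argument establishes uniqueness of the eigenfunction \emph{for the eigenvalue $H(x)$}, but the lemma also asserts that no \emph{other} eigenvalue carries a positive eigenfunction --- with your setup this is the one-line remark that eigenfunctions of a self-adjoint operator for distinct eigenvalues are $L^2$-orthogonal, while two strictly positive functions cannot be; (2) in the linearization of $F$ the term should read $r\,a(x_0,\theta)\varphi$ rather than $r\,a\,Q_0\varphi$ (a typo that does not affect the isomorphism argument).
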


We note that in this article, we suppose that $\Theta$ is bounded to avoid technical difficulties. However, we expect that the results would remain true for unbounded domains $\Theta$ under suitable coercivity conditions on $-a$ such that the spectral problem \fer{ev} has a unique solution.\\

We can now state our main result: 

\begin{theo}{\bf (Asymptotic behavior).}\label{th:main}
 Assume \fer{as:u0}--\fer{as:ini}. Then
\begin{itemize}
\item[(i)] The family $(u_\e)_\e$ converges locally uniformly to $u:[0,\infty)\times\R^d \to \R$ the unique viscosity solution of 
\begin{equation}\label{varHJ2}
\begin{cases}\max (\p_t u- D|\nabla_x u|^2-H,u)=0,  \qquad \text{in $(0,\infty)\times\R^d$},\medskip\\
{u(0,\cdot)=u_0(\cdot)} \qquad  {\text{in $\R^d$}}.
\end{cases}
\end{equation}
\item[(ii)]  Uniformly on compact subsets of $\text{Int}\left\lbrace u < 0\right\rbrace \times \Theta$, \quad $\lim_{\eps \to 0} n^{\eps} = 0$,
\item[(iii)] For every  compact subset $K$ of $\mathrm{Int} \left( \left\lbrace u(t,x) = 0\right\rbrace \cap \left\lbrace H(x) > 0 \right\rbrace \right)$, there exists ${\overline C} > 1$ such that, 
\beq 
\label{rhoinf}
\liminf_{\eps \to 0} \rho_\eps (t,x)\geq \frac{H(x)}{r {\overline C}},\qquad \text{uniformly in $K$}.
\eeq

\end{itemize}
\end{theo}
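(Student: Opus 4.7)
The plan is to combine the half-relaxed limits method of Barles--Perthame with a perturbed test function built from the principal eigenfunction $Q$ of Lemma~\ref{eigenHJ}, then to deduce (ii) and (iii) as refinements of (i).

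\textbf{A priori bounds.} First I would establish a uniform-in-$\e$ upper bound on $\rho_\e$, using \eqref{as:amax} and \eqref{as:ini} together with a logistic-type comparison obtained from integrating \eqref{main2} in $\theta$; this yields, via \eqref{WKB}, the upper bound $u_\e \leq C$. A matching lower bound $u_\e(t,x,\theta) \geq -C_1(x) - C't$ is obtained by dropping the nonnegative penalty $-r n_\e \rho_\e$ and comparing with the linear heat equation on $\R^d \times \Theta$ with source $r a_\infty$. Because the term $\frac{\alpha}{\e^2}|\nabla_\theta u_\e|^2$ in \eqref{ue} is singular, multiplying by $\e^2$ and passing to the limit formally forces any half-relaxed limit of $u_\e$ to be $\theta$-independent.

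\textbf{Sub- and supersolution property.} Define $\overline u(t,x):=\limsup^* u_\e$ and $\underline u(t,x):=\liminf_* u_\e$. The inequality $\overline u \leq 0$ is direct from the $L^\infty$ bound on $\rho_\e$. For the inner Hamilton--Jacobi inequality at a strict maximum $(t_0,x_0)$ of $\overline u - \varphi$, I would use the perturbed test function
\[
\phi_\e(t,x,\theta) := \varphi(t,x) + \e \ln Q(x,\theta).
\]
At a maximum $(t_\e,x_\e,\theta_\e)$ of $u_\e - \phi_\e$, the identity $\Delta \ln Q = \Delta Q/Q - |\nabla \ln Q|^2$ together with \eqref{ev} yields
\[
\tfrac{\alpha}{\e^2}\bigl|\nabla_\theta (\e\ln Q)\bigr|^2 + \tfrac{\alpha}{\e}\Delta_{\theta\theta}(\e \ln Q) \;=\; \alpha \,\frac{\Delta_{\theta\theta} Q}{Q}(x_\e,\theta_\e) \;=\; H(x_\e) - r a(x_\e,\theta_\e),
\]
so that the $r a$ terms cancel in \eqref{ue} and one obtains
\[
\partial_t \varphi(t_\e,x_\e) \;\leq\; D|\nabla_x \varphi(t_\e,x_\e)|^2 + H(x_\e) - r \rho_\e(t_\e,x_\e) + O(\e).
\]
Since $\rho_\e \geq 0$, passing to $\e \to 0$ gives the subsolution property. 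The supersolution inequality is obtained by the symmetric argument at a strict minimum of $\underline u - \varphi$: where $\underline u(t_0,x_0) \geq 0$ there is nothing to prove since $\max(\cdot,0)$ appears; where $\underline u(t_0,x_0) < 0$, part (ii) will show that $\rho_\e \to 0$ locally, killing the non-local term and giving $\partial_t \varphi \geq D|\nabla_x \varphi|^2 + H(x_0)$. A standard comparison principle for obstacle-type first-order HJ equations then ensures uniqueness and upgrades the semi-relaxed limits to the full locally uniform convergence.

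\textbf{Parts (ii) and (iii).} Part (ii) is immediate: on a compact subset of $\mathrm{Int}\{u < 0\} \times \Theta$ one has $u_\e \leq -c < 0$ for small $\e$, hence $n_\e = \exp(u_\e/\e) \to 0$ uniformly. I expect (iii) to be the main obstacle. I would argue by contradiction, assuming $\rho_{\e_k}(t_k,x_k) < H(x_k)/(r\overline C)$ at points of a compact $K \subset \mathrm{Int}\bigl(\{u=0\}\cap\{H>0\}\bigr)$ for $\overline C$ arbitrarily large. Testing \eqref{main2} against $Q(x,\cdot)$ and using \eqref{ev} to absorb the mutation operator, one obtains, for $\tilde \rho_\e(t,x) := \int_\Theta Q(x,\theta) n_\e(t,x,\theta)\,d\theta$,
\[
\e \partial_t \tilde\rho_\e \;\geq\; \e^2 D \Delta_{xx}\tilde\rho_\e + \bigl(H(x) - r\rho_\e(t,x)\bigr)\tilde\rho_\e + O(\e),
\]
where the $O(\e)$ hides commutators generated by the $x$-dependence of $Q$. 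Under the contradiction hypothesis the source term is bounded below on $K$ by $H(x)(1 - 1/\overline C) > 0$, and a sub-barrier/Harnack argument then forces $u_\e$ to become strictly positive for small $\e$, contradicting $u = 0$ on $K$. Making the $O(\e)$ cross-term estimates rigorous in the absence of uniform spatial regularity of $u_\e$ (the second difficulty announced in the abstract) is what I expect to be the most delicate point of the whole proof.
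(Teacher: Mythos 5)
Your outline for part (i) follows the paper's strategy (half-relaxed limits plus the perturbed test function $\varphi+\e\ln Q$, with \eqref{ev} cancelling the $ra$ term), but it has a genuine gap exactly where the paper invests its main technical effort: the $\theta$-uniformity of the limits. Your argument "multiply \eqref{ue} by $\e^2$ and pass to the limit formally" yields, at best, that $\overline u$ is a viscosity \emph{subsolution} of $|\nabla_\theta u|^2=0$; at a minimum point of $u_\e-\varphi$ the terms $\e\al\Delta_\theta\varphi+\al|\nabla_\theta\varphi|^2\geq o(1)$ give no information, so nothing is obtained for $\underline u$. Yet both key steps of your scheme need uniformity in $\theta$: to get $\overline u\le 0$ you need $u_{\e_n}(t_n,x_n,\theta)>\delta$ for \emph{all} $\theta$ (a single $\theta$ does not force $\rho_{\e_n}=\int_\Theta e^{u_{\e_n}/\e_n}d\theta$ to blow up if the set where $u_{\e_n}>\delta/2$ shrinks), and in the supersolution step you need $u_{\e_n}<-\delta$ uniformly in $\theta$ to conclude $\rho_{\e_n}\to0$ (invoking part (ii) here is moreover circular, since (ii) is a consequence of (i)). The paper supplies this uniformity through Theorem \ref{regularity}: a Bernstein-type Lipschitz estimate \eqref{lipv} on $v_\e=\sqrt{C+ra_\infty t+\gamma^2-u_\e}$, giving $|\nabla_\theta v_\e|\le \frac{\e}{2\sqrt{\al t}}+(rb_\infty\e^2/(\al\gamma))^{1/3}\to0$ for $t>0$, whence \eqref{uinf}--\eqref{usup}. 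You would need to prove such an estimate (or an equivalent) for your proof to close; you should also not skip the verification of the initial condition in the viscosity sense (\eqref{inicond1}--\eqref{inicond2}), which requires a separate argument to rule out the test points sticking to $\{t=0\}$.

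For part (iii) your route, as written, fails at the point you yourself flag. If you test \eqref{main2} against $Q(x,\cdot)$, the term $\e^2D\Delta_x\int_\Theta n_\e Q(x,\theta)\,d\theta$ produces $\e^2D\int_\Theta\bigl(2\nabla_xn_\e\cdot\nabla_xQ+n_\e\Delta_xQ\bigr)d\theta$, and since $n_\e\sim e^{u_\e/\e}$ with no uniform control on $\nabla_xu_\e$, these commutators are not $O(\e)$; there is no way to absorb them. The paper's fix is to freeze the coefficient: take $F_\e(t,x)=\int_\Theta n_\e(t,x,\theta)Q(x_0,\theta)\,d\theta$ with $x_0$ fixed, so no commutators appear at all, and the only error is $r\int_\Theta Q(x_0,\theta)n_\e\,[a(x,\theta)-a(x_0,\theta)]\,d\theta$, which is controlled on a small ball $B_r(x_0)$ by continuity of $a$ at the price of a factor $(1-\delta)$ in front of $H(x_0)$. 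One then works with $I_\e=\e\ln F_\e$, notes $I=0$ near $(t_0,x_0)$, and tests with $\varphi=-|x-x_0|^2-(t-t_0)^2$ at a minimum of $I_\e-\varphi$ to get $\liminf F_\e\ge\frac{C_m}{r}(1-\delta)H(x_0)$ directly (no contradiction argument and no Harnack inequality on $n_\e$ is needed). Finally, the constant $\overline C$ in \eqref{rhoinf}, which your proposal never identifies, is the Harnack constant of the \emph{eigenfunction}: $F_\e$ is squeezed between $C_m\rho_\e$ and $C_M\rho_\e$ with $C_m=\min_\Theta Q(x_0,\cdot)$, $C_M=\max_\Theta Q(x_0,\cdot)$, and $C_M/C_m\le\overline C$ uniformly on $K$ by the Harnack inequality applied to \eqref{ev}.
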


We notice that $u$ does not depend on $\theta$ and therefore, we do not have any supplementary constraint in \fer{varHJ2} due to the boundary. 
The variational equality \fer{varHJ2} gives indeed the effective propagation behavior of the population; the zero level-sets of $u$ indicate where the population density is asymptotically positive (see also Lemma \ref{grate}). We recall that the effective Hamiltonian $H$ in \fer{varHJ2} is defined by the spectral problem \fer{ev} which hides the information on  the trait variability.
 \\


To understand Theorem \ref{th:main}, it is illuminating to provide the following heuristic argument. We write a \textit{formal} expansion of $u_\e$:
\begin{equation*}
u_\e(t,x,\theta) = u_0(t,x,\theta) + \e u_1(t,x,\theta) + \mathcal{O}(\e^2).
\end{equation*}
Replacing this in \fer{ue} and keeping the terms of order $\e^{-2}$ we obtain, for all $(t,x,\theta)$,
\begin{equation*}
\left\vert \nabla_\theta u_0(t,x,\theta) \right\vert^2=0.
\end{equation*}
This suggests that $u_0$ should be independent of $\theta$: $u_0(t,x,\theta)=u_0(t,x)$. Next, keeping the zero order terms (terms with coefficient $\e^0$), yields:
\begin{equation}\label{heur1}
- \alpha \left( \Delta_\theta u_1 + |\nabla_\theta u_1|^2 \right) - r a(x,\theta)=\left[-\p_t u_0+ D|\nabla_x u_0|^2-r\rho_0\right](t,x).
\end{equation}
Here, $\rho_0$ denotes the \textit{formal} limit of $\rho_\eps$ when $\eps \to 0$. Moreover, $u_1$ satisfies Neumann boundary conditions. Since the r.h.s. of \eqref{heur1} is independent of $\theta$, Lemma \ref{eigenHJ}  implies
\begin{equation*}
\left[\p_t u_0- |\nabla_x u_0|^2+r\rho_0 \right](t,x)=H(x)
\quad
\text{and}
\quad
u_1(t,x,\theta)=\ln Q(x,\theta)+\mu(t,x).
\end{equation*}
We can now write 
\begin{equation*}
n_\e(t,x,\theta)\approx e^{\f{u_0(t,x)}{\e}+u_1(t,x,\theta)}, \qquad \rho_\eps(t,x) \approx e^{\mu(t,x)+\f{u_0(t,x)}{\e}}.
\end{equation*}
As a consequence, $\rho_\eps$ uniformly bounded implies that $u_0$ is nonpositive. Furthermore 
\begin{equation*}
\rho_\e > 0 \qquad {\Longrightarrow} \qquad u_0= 0.
\end{equation*}
We deduce that
\begin{equation*}
\begin{cases}
\rho_0(t,x)=0  \quad \Longrightarrow \quad  \p_t u_0(t,x)- D|\nabla_x u_0|^2(t,x)-H(x)=0,\medskip\\
\rho_0(t,x)>0 \quad  \Longrightarrow \quad  u_0(t,x)=0\quad \text{and}\quad r\exp(\mu(t,x)) = r \rho_0(t,x) = H(x),
\end{cases}
\end{equation*}
and thus
\begin{equation*}
\max \left( \p_t u_0- D |\nabla_x u_0|^2-H(x)   \,,\, u_0 \right)=0.
\end{equation*}
Moreover the above arguments suggest that
\begin{equation*}
n_\e(t,x,\theta) \underset{\eps \to 0}{\longrightarrow} \begin{cases}\frac{H(x)}{r}Q(x,\theta) & \text{if $u_0(t,x)=0$}, \medskip\\
0&\text{if $u_0(t,x)<0$,}
\end{cases}
\end{equation*}
with $Q$ and $H$ given by Lemma \ref{eigenHJ}. We notice finally that, the roles of the trait variable $\theta$ and the spectral problem \eqref{ev} are  respectively similar to  the ones of the fast variable and the cell problem in homogenization theory.\\

Theorem \ref{th:main} does not provide the limits of $\rho_\e$ and $n_\e$ in $\mathrm{Int} \left( \left\lbrace u(t,x) = 0\right\rbrace \cap \left\lbrace H(x) > 0 \right\rbrace \right)$. The determination of such limits in the general case, as was obtained for instance in \cite{LE.PS:89}, is beyond the scope of the present paper. The difficulty here is the lack of regularity estimates in the $x$-direction and the lack of comparison principle for the non-local equation \fer{main2}.  This difficulty also appears in the study of propagating wave solutions of \fer{main1} (see \cite{MA.JC.GR:13}),  where it is not clear whether the propagating front is monotone and the density and the distribution of the population at the back of the front is unknown. However, in Section \ref{asympt} (see Proposition \ref{convvarsep}), we prove the convergence of $n_\e$ and $\rho_\e$ in a particular case. The numerical results in Section \ref{num} suggest that such limits might hold in general.\\

We emphasize that \eqref{main2} does not admit a comparison principle which leads to technical difficulties. This is not only due to the presence of a non-local term but also due to the structure of the reaction term. We refer to \cite{CC.SH.CS:12,EB.VC.GN:13} for models admitting comparison principle although the reaction terms contain non-localities. \\

To prove the convergence of $(u_\e)_\e$ in Theorem \ref{th:main}, we use some regularity estimates that we state below.

\begin{theo}{\bf (Regularity results for $u_{\e}$).}\label{regularity}
Assume \fer{as:u0}, \fer{as:amax}, \fer{as:ad}, \fer{as:ini}.
Then the family $( u_{\eps} )_{\eps > 0}$ is uniformly locally bounded in $\R^+ \times \R\times \Theta$. More precisely, the following inequalities hold:
\begin{equation}\label{boundue}
{\forall (t,x,\theta) \in \R^+ \times \R^d \times \Theta, \qquad }r \psi(x) t - C_1(x) - r \e D M {t^2} \leq u_{\e}(t,x,\theta) \leq C + r a_{\infty} t,
\end{equation}
where $\psi(x) := -Mx^2 + B$.
Next, let $\gamma > 0$ and for all $\eps > 0$, $v_\e:=\sqrt{ C + r a_{\infty} t + \gamma^2 -u_\e}$. Then, for all $\eps > 0$, the following bound holds: 
\begin{equation}\label{lipv}
\forall (t,x,\theta) \in \R^+ \times \R^d \times \Theta, \qquad \vert \nabla_\theta v_\e \vert \leq \frac{\eps}{2 \sqrt{\alpha t}} + \left( \frac{r b_{\infty} \eps^2}{\alpha \gamma}\right)^{\frac13} 
\end{equation}
In particular, this gives a regularizing effect in trait for all $t>0$, and the fact that $ \nabla_\theta v_\e$ converges locally uniformly to 0 when $\eps$ goes to 0.
\end{theo}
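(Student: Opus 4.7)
The plan is to proceed in three stages: first, an $L^\infty$ estimate on $\rho_\eps$; second, a sub/supersolution construction yielding \eqref{boundue}; third, a Bernstein-type argument yielding \eqref{lipv}.

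\emph{$L^\infty$ bound on $\rho_\eps$.} Integrating \eqref{main2} over $\theta \in \Theta$ eliminates $\int_\Theta \Delta_\theta n_\eps\,d\theta$ via the Neumann condition. Using $a(x,\theta) \leq a_\infty$ from \eqref{as:amax} pointwise yields
\begin{equation*}
\eps \p_t \rho_\eps \leq \eps^2 D \Delta_x \rho_\eps + r \rho_\eps (a_\infty - \rho_\eps).
\end{equation*}
Combined with \eqref{as:ini} and the scalar maximum principle, this gives $\rho_\eps \leq a_\infty$ on $[0,\infty)\times\R^d$, a bound I shall use throughout the remainder of the argument.

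\emph{Two-sided bound \eqref{boundue} on $u_\eps$.} With $\rho_\eps$ known and bounded, \eqref{ue} is a viscous Hamilton--Jacobi equation with quadratic Hamiltonian and bounded forcing, so the standard comparison principle applies. The function $\bar u(t) = C + r a_\infty t$ is a supersolution (all gradient and Laplacian terms vanish, and $\p_t \bar u = r a_\infty \geq r(a-\rho_\eps)$ because $\rho_\eps \geq 0$ and $a \leq a_\infty$), so \eqref{as:u0} yields the upper bound. For the lower bound I would use $a - \rho_\eps \geq \psi(x)$ (from the previous step combined with \eqref{as:amax}) and try $\underline u(t,x) = r t \psi(x) - C_1(x) - r \eps D M t^2$, after smoothly dominating $C_1$ by a paraboloid if necessary: one has $|\nabla_x \underline u|^2 \geq 0$ and $\Delta_x \psi = -2Md$, and the $-r \eps D M t^2$ correction is exactly what cancels the $O(\eps r t)$ residual coming from $\eps D \Delta_x \underline u$, so $\underline u$ is a subsolution with $\underline u(0,\cdot) \leq u_\eps^0$, and comparison concludes.

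\emph{Gradient estimate \eqref{lipv} on $v_\eps$.} By the upper bound just obtained, $w_\eps := C + r a_\infty t + \gamma^2 - u_\eps \geq \gamma^2$, so $v_\eps = \sqrt{w_\eps} \geq \gamma > 0$ is smooth. Using $2 v_\eps \nabla v_\eps = -\nabla u_\eps$, $|\nabla u_\eps|^2 = 4 v_\eps^2 |\nabla v_\eps|^2$, and $\Delta u_\eps = -2 v_\eps \Delta v_\eps - 2|\nabla v_\eps|^2$, direct substitution in \eqref{ue} and division by $2 v_\eps$ rewrite the $u_\eps$-equation as a parabolic equation for $v_\eps$ whose nonlinear contributions collapse into the absorbing, good-sign quantity $-(\al/\eps^2)(2v_\eps^2 - \eps)|\nabla_\theta v_\eps|^2/v_\eps$ (and its $x$-analogue), negative as soon as $v_\eps \geq \gamma \geq \sqrt{\eps/2}$, plus a bounded source. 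I would then implement the Bernstein method on $q_\eps := |\nabla_\theta v_\eps|^2$: differentiate this equation in $\theta_i$, contract with $\p_{\theta_i} v_\eps$, and sum. The $\theta$-derivative of the source creates a term of size $\lesssim r b_\infty \sqrt{q_\eps}/\gamma$ thanks to \eqref{as:ad}, while the $\theta$-derivative of the good-sign absorbing quadratic term provides a controlling contribution $-c(\al/\eps^2)q_\eps^2$. A Young-inequality balance of these two scales against the trait diffusion $(\al/\eps)\Delta_\theta$ gives the bound $\sqrt{q_\eps} \lesssim (r b_\infty \eps^2/(\al \gamma))^{1/3}$---exactly the second summand in \eqref{lipv}, the exponent $1/3$ being the signature of this three-term balance. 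The first summand $\eps/(2\sqrt{\al t})$ is the initial-layer contribution obtained by comparing $q_\eps$ with the heat-kernel gradient scale of the trait diffusion $(\al/\eps)\Delta_\theta$ acting on the (possibly $\theta$-dependent) initial datum $v_\eps^0$. The Neumann condition on $\p\Theta$ is handled in the standard Hopf manner via the convexity of $\Theta$.

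\emph{Main obstacle.} The delicate step is the Bernstein argument. The transformation $v_\eps = \sqrt{\cdots}$ is essential: it converts the potentially destabilising $(\al/\eps^2)|\nabla_\theta u_\eps|^2$ of \eqref{ue} into an absorbing term for $v_\eps$, so that after differentiating in $\theta$ the mixed cross-terms (including those involving the $x$-gradient, which is not assumed small) can be dominated. Tracking the clean constants and the $1/3$ exponent requires careful bookkeeping of how the three scales $(\al/\eps^2)$-absorption, $(\al/\eps)$-diffusion and $b_\infty/\gamma$-source interact in the Bernstein inequality for $q_\eps$.
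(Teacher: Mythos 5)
Your proposal follows essentially the same route as the paper: the integrated KPP bound $\rho_\eps \le a_\infty$, the explicit super/subsolutions $C + r a_\infty t$ and $r\psi(x)t - C_1(x) - r\eps DMt^2$ for \fer{boundue}, and the Bernstein argument on the square-root change of unknown $v_\eps$, with the same three-scale balance producing the exponent $1/3$ and the $\eps/(2\sqrt{\alpha t})$ initial layer (the paper runs Bernstein on $|\nabla_\theta v_\eps|$ and compares with the explicit time-dependent supersolution $z(t)=\frac{\eps}{2\sqrt{\alpha t}}+(rb_\infty\eps^2/(\alpha\gamma))^{1/3}$, whereas you work with $|\nabla_\theta v_\eps|^2$ — an immaterial difference). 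The argument is correct and matches the paper's proof in all essential respects.
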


We notice from \fer{lipv} that, the limit of $(v_\e)_\e$ (and consequently the limit of $(u_\e)_\e$) as $\e\to 0$, is independent of $\theta$ for all $t>0$, while we do not make any regularity assumption on the initial data.
To obtain the regularizing effect in $\theta$, we provide a Lipschitz estimate on a well-chosen auxiliary function $v_\e$ instead of $u_\e$,  using the Bernstein method \cite{C.I.L:92}.
%
Note that, we do not have any estimate on the derivative of $u_\e$ with respect to $x$ due to the dependence of $\rho_\e$ on $x$. Therefore, we cannot prove the convergence of the $u_\e$'s  as stated in Theorem \ref{th:main} directly from the regularity estimates above. {For this purpose}, we use the so called  half-relaxed limits method for viscosity solutions, see \cite{GB.BP:88}. Moreover, to prove the convergence to the Hamilton-Jacobi equation \fer{varHJ2} we are inspired from the method of perturbed test functions in homogenization \cite{LE:89}.  \\

Finally, the family $\left( u_{\e} \right)_{\e}$ being locally uniformly bounded from Theorem \ref{regularity}, we can introduce its upper and lower semi-continuous envelopes that we will use through the article:
\begin{equation*} 
\underline u(t,x,\theta) := \underset{\underset{(s,y,\theta')\to (t,x,\theta)}{\e\to 0}}{ \underline \liminf} u_\e(s,y,\theta'),
\end{equation*}
\begin{equation*}
\overline u(t,x,\theta) := \underset{\underset{(s,y,\theta')\to (t,x,\theta)}{\e\to 0}}{ \overline \limsup} u_\e(s,y,\theta').
\end{equation*}

Thanks to Theorem \ref{regularity}, we know that $\vert \nabla_{\theta} u_{\e} \vert \to 0$ as $\eps \to 0$, for all $t>0$. As a conclusion, the previous limits do not depend on the variable $\theta$. We have, for all $\theta \in \Theta${, $x \in \R^d$} and $t>0$,
\begin{equation}\label{uinf} 
\underline u(t,x,\theta) = \underline u(t,x) = \underset{\underset{(s,y)\to (t,x)}{\e\to 0}}{ \underline \liminf} u_\e(s,y,\theta),
\end{equation}
\begin{equation}\label{usup}
\overline u(t,x,\theta) = \overline u(t,x)  = \underset{\underset{(s,y)\to (t,x)}{\e\to 0}}{ \overline \limsup} u_\e(s,y,\theta),
\end{equation}

The remaining part of the article is organized as follows. Section \ref{reg} is devoted to the proof of Lemma \ref{eigenHJ} and Theorem \ref{regularity}. The  convergence to the Hamilton-Jacobi equation  (the first part of Theorem \ref{th:main}) is proved in Section \ref{conv}. In Section \ref{asympt}, using the Hamilton-Jacobi description, we study the limits of $n_\e$ and $\rho_\e$ and in particular complete the proof of Theorem \ref{th:main}. We also provide some qualitative properties on the effective Hamiltonian $H$ and the corresponding eigenfunction $Q$ in Section \ref{Qualitative}. Finally, in Section \ref{Examples} we give some examples and comments on the spectral problem, and some numerical illustrations for the time-dependent problem.

\section{Regularity results ({The p}roof of Theorem \ref{regularity})}\label{reg}

In this {s}ection we prove Theorem \ref{regularity}. To this end, we first provide a uniform upper bound on $\rho_\e$ (see Lemma \ref{rhobound}). Next, using this estimate we give uniform upper and lower bounds on $u_\e$. Finally we prove a Lipschitz estimate with respect to $\theta$ on $u_\e$. \\

\begin{lem}{\bf (Bound on $\rho_{\e}$).}\label{rhobound}

Assume \fer{as:amax} and \fer{as:ini}. Then, for all $\eps > 0$, the following \textit{a priori} bound holds :

\begin{equation}\label{rho}
\forall (t,x) \in \R^+ \times \R^d, \qquad 0 \leq \rho_{\e}(t,x) \leq a_{\infty}.
\end{equation}
\end{lem}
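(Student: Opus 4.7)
The plan is to obtain the two bounds separately: nonnegativity follows from the linear parabolic structure of the equation for $n_\e$, and the upper bound comes from integrating in $\theta$ and comparing with a logistic supersolution.

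First I would establish $n_\e \geq 0$. Viewing \fer{main2} as a linear parabolic equation in $n_\e$ with the (at this stage possibly unbounded, but irrelevant) coefficient $r(a(x,\theta)-\rho_\e(t,x))$ in front of $n_\e$, the standard maximum principle together with $n_\e^0\geq 0$ yields $n_\e\geq 0$ on $[0,\infty)\times\R^d\times\Theta$. Integrating in $\theta$ then gives $\rho_\e\geq 0$, which is the left-hand inequality of \fer{rho}.

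Next, I would derive a closed PDE inequality for $\rho_\e$. Integrating \fer{main2} in $\theta$ over $\Theta$ and using the Neumann boundary condition to kill the $\alpha\Delta_{\theta\theta}n_\e$ term, I obtain
\begin{equation*}
\e\,\p_t\rho_\e = \e^2 D\,\Delta_{xx}\rho_\e + r\int_\Theta n_\e(t,x,\theta)\bigl(a(x,\theta)-\rho_\e(t,x)\bigr)\,d\theta.
\end{equation*}
Using the upper bound $a(x,\theta)\leq a_\infty$ from \fer{as:amax} together with $n_\e\geq 0$, the right-hand side is dominated by $\e^2 D\,\Delta_{xx}\rho_\e + r\rho_\e(a_\infty-\rho_\e)$, so $\rho_\e$ is a subsolution of the KPP-type equation
\begin{equation*}
\e\,\p_t w = \e^2 D\,\Delta_{xx} w + r w(a_\infty - w).
\end{equation*}

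Finally I would conclude by comparison with the constant supersolution $w\equiv a_\infty$. Since this constant is an exact solution of the above equation, and since \fer{as:ini} guarantees $\rho_\e^0(x)\leq a_\infty$, the parabolic comparison principle (applied on $\R^d$ after noting that the difference $\rho_\e - a_\infty$ is bounded above, which justifies comparison through the standard exponential weight argument on $\R^d$) gives $\rho_\e(t,x)\leq a_\infty$ for all $(t,x)\in\R^+\times\R^d$. Combined with the lower bound this proves \fer{rho}.

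The only mildly delicate step is invoking comparison on the unbounded domain $\R^d$ for the scalar KPP inequality; this is handled in the classical way by multiplying the difference $\rho_\e-a_\infty$ by a suitable Gaussian-type weight or by using that $\rho_\e$ is bounded above by the solution of the corresponding spatially homogeneous ODE starting from $\sup_x\rho_\e^0\leq a_\infty$. No nonlocal-term difficulty appears here because the comparison is carried out against the explicit constant $a_\infty$ rather than between two solutions of \fer{main2}.
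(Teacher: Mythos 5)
Your proposal is correct and follows essentially the same route as the paper: nonnegativity of $n_\e$ by the maximum principle, integration in $\theta$ using the Neumann condition, domination by the logistic nonlinearity $r\rho_\e(a_\infty-\rho_\e)$ via \fer{as:amax} and $n_\e\geq 0$, and comparison with the constant $a_\infty$ using \fer{as:ini}. The only difference is that you spell out the justification of the comparison principle on the unbounded domain $\R^d$, which the paper leaves implicit.
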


\begin{proof}[\bf Proof of Lemma \ref{rhobound}]
The nonnegativity follows directly from the nonnegativity of $n_{\eps}$. The upper bound can be derived using the maximum principle. We  show indeed that $\rho_{\e}$ is a subsolution of a suitable Fisher-KPP equation. We integrate \eqref{main2} in $\theta$ to obtain

\begin{equation*}\label{KPP}
\e\p_t \rho_\e=\e^2 D \Delta_x\rho_\e+r\left( \int_{\Theta} n_\e(t,x,\theta) a(x,\theta)d\theta-\rho_\e^2\right).
\end{equation*}
Using \eqref{as:amax} and the non negativity of $n_{\eps}$, we deduce
\begin{equation*}\label{subKPP}
\e\p_t \rho_\e \leq \e^2 D \Delta_x^2\rho_\e+r \rho_{\e} \left( a_{\infty} -\rho_{\e}\right),
\end{equation*}
so that the maximum principle and \eqref{as:ini} ensure
\begin{equation*}
\forall (t,x) \in \R^+ \times \R^d, \qquad \rho_\e(t,x)\leq a_{\infty}.
\end{equation*}

\end{proof}

We can now proceed with the proof of Theorem \ref{regularity}. For legibility, we divide the proof into several steps as follows.\\

{\bf \# Step 1.  Upper bound on $u_{\e}$.}
Define $\tilde u_{\e} := u_{\e} - r a_{\infty} t$.
Using \eqref{ue}, we find
\begin{equation*}\label{tildeue}
\p_t \tilde u_\e = \e D\Delta_x \tilde u_\e+\f{\al}{\e} \Delta_\theta \tilde u_\e+D|\nabla_x  \tilde u_\e|^2+\f{\al}{\e^2}|\nabla_\theta  \tilde u_\e|^2+r (a(x,\theta)-a_{\infty}) - r \rho_{\e}.
\end{equation*}
Then, we conclude from \fer{as:amax}, \fer{as:u0} and the maximum principle that
\begin{equation*}
\forall(t,x,\theta) \in \R^+ \times \R^d \times \Theta, \qquad u_{\e}(t,x,\theta) \leq u_{\e}^{0}(x,\theta) + r a_{\infty}t \leq C + r a_{\infty} t.  
\end{equation*}

{\bf \# Step 2. Lower bound on $u_{\e}$.}
From \eqref{ue}, \fer{as:amax} and Lemma \ref{rhobound} we can write
\begin{equation*}
\p_t u_\e \geq \e D\Delta_xu_\e+\f{\al}{\e} \Delta_\theta u_\e + r (a(x,\theta)- a_{\infty}) \geq \e D\Delta_x u_\e+\f{\al}{\e} \Delta_\theta u_\e + r \psi(x),
\end{equation*}
with $\psi(x)=-M |x|^2+B$.
Next we rewrite the above inequality, in terms of  $q_{\e} := u_{\e} - r \psi(x)  t + \e r M D {t^2}$:
\begin{equation*}
\p_t q_\e \geq \e D\Delta_x q_\e + \f{\al}{\e} \Delta_\theta q_\e + r \e \psi''(x) D t + 2 r \e M D t  \geq \e D\Delta_x q_\e + \f{\al}{\e} \Delta_\theta q_\e 
\end{equation*}
Finally the maximum principle combined with Neumann boundary conditions and \fer{as:u0} imply that 
\begin{equation*}
u_{\e} \geq u_{\e}^0 + r \psi(x) t - r \e D M {t^2} \geq - C_1(x) + r \psi(x) t - r \e D M {t^2}.
\end{equation*}

{\bf \# Step 3. Lipschitz bound.}
We conclude the proof of Theorem \ref{regularity} by using the Bernstein method {\cite{C.I.L:92}} to obtain a regularizing effect with respect to the variable $\theta$. The upper bound \fer{boundue} proved above ensures that  the function $v_{\e}$ is well-defined. We then rewrite \fer{ue} in terms of $v_\e$: 
\begin{equation*}
\p_t v_\e=\e D \Delta_x v_\e+\f{\al}{\e} \Delta_\theta v_\e + D \left( \f{\e}{v_\e}-2v_\e \right)|\nabla_x  v_\e|^2+
\left(  \f{\al}{\e v_\e}-\f{2\al v_\e}{\e^2}  \right)|\nabla_\theta  v_\e|^2 - \frac{1}{2 v_{\e}} r (a(x,\theta)- a_{\infty} - \rho_\e).
\end{equation*}
We differentiate the above equation with respect to $\theta$ and 
multiply it by $\f{\nabla_\theta  v_\e}{|\nabla_\theta  v_\e|}$ to obtain
\begin{multline}\label{eq2}
\p_{t} |\nabla_\theta  v_{\e}| \leq \e D \Delta_x | \nabla_\theta  v_\e| + \f{\al}{\e} \Delta_\theta  |\nabla_\theta  v_\e| + 2D\left( \f{\e}{v_\e}-2v_\e \right)\nabla_x  v_\e \cdot \nabla_x  | \nabla_\theta   v_\e | \\+ 2\left(  \f{\al}{\e v_\e}-\f{2\al v_\e}{\e^2}  \right)\nabla_\theta  v_\e \cdot \nabla_\theta  | \nabla_\theta  v_\e | + D \left( -\f{\e}{v_\e^2}-2 \right)|\nabla_x  v_\e|^2 | \nabla_\theta  v_\e | \\+ \left(  -\f{\al}{\e v_\e^2}-\f{2\al }{\e^2}  \right)|\nabla_\theta  v_\e|^3 + \frac{r |\nabla_\theta  a(x,\theta)|}{2 v_{\e}},
\end{multline}
since the last contribution of the r.h.s of the above inequality becomes nonpositive.
From \fer{as:ad} and \fer{boundue}, it follows that $w_{\e} := |\nabla_\theta  v_\e|$ is a subsolution of the following equation
{\begin{multline}\label{love}
\p_{t} w_{\e} \leq \e D \Delta_x w + \f{\al}{\e} \Delta_\theta  w_{\e} + 2D\left( \f{\e}{v_\e}-2v_\e \right)\nabla_x  v_\e \cdot \nabla_x  w_{\e} \\+ 2\left(  \f{\al}{\e v_\e}-\f{2\al v_\e}{\e^2}  \right)\nabla_\theta  v_\e \cdot \nabla_\theta  w_{\e} - \f{2\al }{\e^2}  |w_{\e}|^3 + \frac{r b_{\infty}}{2 \gamma}.
\end{multline}}
The last step is now to prove that $z(t) := \frac{\eps}{2 \sqrt{\alpha t}} + \left( \frac{r b_{\infty} \eps^2}{\alpha \gamma}\right)^{\frac13}$ is a supersolution of \eqref{love}. We compute
{\begin{equation*}
z'(t) + \frac{2 \alpha }{ \eps^2 } (z(t))^3 = \frac{2 \alpha }{ \eps^2 } \left( z(t)^3 - \left( z(t) - \left( \frac{r b_{\infty} \eps^2}{\alpha \gamma}\right)^{\frac13}  \right)^3 \right)
\geq \frac{r b_{\infty}}{2 \gamma}.
\end{equation*}}
The Neumann boundary {condition for $u_\e$ implies} a Dirichlet boundary condition for $w_\e$. Thus, \eqref{lipv} follows from the comparison principle.

\section{Convergence to the Hamilton-Jacobi equation  (The proof of  Theorem \ref{th:main}--(i))}\label{conv}

In this section, we first prove Lemma \ref{eigenHJ}. Next, using the regularity estimates obtained above we prove the convergence of  $(u_\e)_\e$ to the solution of \fer{varHJ2} (Theorem \ref{th:main} (i)).  This will be derived from the following proposition which also provides a partial result, once we relax assumption \fer{as:ini2}:

\begin{prop}{\bf (Convergence to the Hamilton-Jacobi equation).}\label{HJlim}

\begin{itemize}
\item[(i)] Assume \fer{as:u0}, \fer{as:amax}, \fer{as:ad}, \fer{as:ini} such that Theorem \ref{regularity} holds. Let $H$ be the eigenvalue defined in Lemma \ref{eigenHJ}. Then, $\overline u$ (respectively $\underline u$) is a viscosity subsolution (respectively supersolution) of 
\begin{equation}\label{varHJ}
\max (\p_t u- D |\nabla_x u|^2-H,u)=0, \qquad \text{in $(0,\infty)\times\R^d$}.
\end{equation}
\item[(ii)] If we assume additionally \fer{as:ini2}, then $\overline u=\underline u$ and, as $\e$ vanishes, $(u_\e)_\e$ converges locally uniformly to $u=\overline u=\underline u$ the unique viscosity solution of 
\begin{equation*}
\begin{cases}\max (\p_t u- D|\nabla_x u|^2-H,u)=0, \qquad \text{in $(0,\infty)\times\R^d$}, \medskip\\
u(0,x)=u_0(x).
\end{cases}
\end{equation*}
\end{itemize}
\end{prop}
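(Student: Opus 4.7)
The plan is to combine the half-relaxed limits method with Evans's perturbed test function technique from homogenization. Direct passage to the limit is impossible since Theorem \ref{regularity} provides regularity only in the $\theta$-variable and the nonlocal term $\rho_\e$ destroys any comparison principle for \fer{ue}. The natural corrector associated with the eigenvalue problem \fer{ev} is $\chi(x,\theta):=\ln Q(x,\theta)$, which is smooth (because $Q>0$) and for which the direct computation
\begin{equation*}
\alpha\,\Delta_\theta\chi+\alpha\,|\nabla_\theta\chi|^2+r\,a(x,\theta)=H(x)
\end{equation*}
holds, with Neumann boundary conditions inherited from $Q$. Plugging the perturbed test function $\varphi_\e(t,x,\theta):=\varphi(t,x)+\e\,\chi(x,\theta)$ into \fer{ue} will, at dominant order, eliminate the $\theta$-dependence and reveal $H(x)$.

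\textbf{Part (i): sub- and supersolution properties.} To show $\overline u$ is a subsolution of \fer{varHJ}, take $\varphi\in C^2$ such that $\overline u-\varphi$ has a strict local maximum at $(t_0,x_0)\in(0,\infty)\times\R^d$. First, the bound $\rho_\e\leq a_\infty$ of Lemma \ref{rhobound} combined with the asymptotic $\theta$-uniformity $\sup_\theta u_\e-\inf_\theta u_\e\to 0$ extracted from Theorem \ref{regularity} forces $\overline u(t_0,x_0)\leq 0$: otherwise $n_\e=\exp(u_\e/\e)$ would be exponentially large uniformly in $\theta$ on a small ball, contradicting the integral bound. Next, select local maxima $(t_\e,x_\e,\theta_\e)$ of $u_\e-\varphi_\e$ over a fixed compact neighborhood, with $(t_\e,x_\e)\to(t_0,x_0)$ by the standard strict-extremum argument and $\theta_\e\to\theta_\star\in\overline\Theta$ along a subsequence. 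At this maximum, using $\Delta_x u_\e\leq\Delta_x\varphi_\e$ and $\Delta_\theta u_\e\leq\Delta_\theta\varphi_\e$ in \fer{ue}, the $O(\e)$ cross terms are absorbed and the corrector identity regroups the remaining $\theta$-contributions into $H(x_\e)$, yielding
\begin{equation*}
\partial_t\varphi(t_\e,x_\e)\leq D\,|\nabla_x\varphi(t_\e,x_\e)|^2+H(x_\e)-r\,\rho_\e(t_\e,x_\e)+O(\e).
\end{equation*}
Since $\rho_\e\geq 0$, sending $\e\to 0$ delivers the desired subsolution inequality; the case $\theta_\star\in\partial\Theta$ is handled by the Neumann conditions satisfied by both $u_\e$ and $\chi$. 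For the supersolution property of $\underline u$, let $\varphi$ touch $\underline u$ from below at $(t_0,x_0)$. If $\underline u(t_0,x_0)\geq 0$ there is nothing to prove; otherwise the $\theta$-uniformity together with $\underline u(t_0,x_0)<0$ force $\rho_\e\to 0$ uniformly on a neighborhood (indeed $n_\e\leq e^{-c/(2\e)}$ there, uniformly in $\theta$). Running the analogous perturbed test function argument at local minima of $u_\e-\varphi_\e$ reverses the inequality above, and passing to the limit yields $\partial_t\varphi-D|\nabla_x\varphi|^2-H(x_0)\geq 0$.

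\textbf{Part (ii): initialization, comparison, main obstacle.} Under the additional hypothesis \fer{as:ini2}, the explicit two-sided bounds of Theorem \ref{regularity}, $u_\e^0+r\psi(x)t-r\e DMt^2\leq u_\e\leq u_\e^0+ra_\infty t$, force $\overline u(0,\cdot)=\underline u(0,\cdot)=u_0$ in the relaxed-limit sense once $u_0$ is continuous. The classical comparison principle for the obstacle Hamilton-Jacobi equation $\max(\partial_t u-D|\nabla_x u|^2-H,u)=0$ with continuous initial data, which follows from a standard doubling-of-variables argument adapted to the monotone max structure, then gives $\overline u\leq\underline u$; combined with the automatic reverse inequality $\underline u\leq\overline u$, this produces $\overline u=\underline u=u$ and hence local uniform convergence of $(u_\e)_\e$ to the unique viscosity solution. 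The main technical obstacle throughout is the absence of a comparison principle for \fer{ue} and the lack of any $x$-regularity on $u_\e$: the whole scheme is engineered so that $\rho_\e$ intervenes only monotonically—through $\rho_\e\geq 0$ on the subsolution side and through the locally uniform decay $\rho_\e\to 0$ on $\{\underline u<0\}$ on the supersolution side—rather than through any pointwise control that we do not possess.
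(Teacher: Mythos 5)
Your Part (i) reproduces the paper's argument essentially step for step: the corrector $\eta=\ln Q$ from Lemma \ref{eigenHJ}, the perturbed test function $\varphi+\e\eta$, the bound $\rho_\e\leq a_\infty$ of Lemma \ref{rhobound} combined with the $\theta$-uniformity from Theorem \ref{regularity} to get $\overline u\leq 0$, dropping $-r\rho_\e\leq 0$ on the subsolution side, and using the decay of $\rho_\e$ where $\underline u<0$ on the supersolution side. One small overstatement there: the definition of $\underline u$ as a relaxed liminf does not give $\rho_\e\to 0$ \emph{uniformly on a neighborhood} of $(t_0,x_0)$; what you actually have, and all you need, is $u_\e(t_\e,x_\e,\theta_\e)\to\underline u(t_0,x_0)<0$ along the sequence of minimum points of $u_\e-\varphi-\e\eta$, which together with the $\theta$-Lipschitz bound gives $\rho_\e(t_\e,x_\e)\to 0$ along that sequence.

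Part (ii) contains a genuine gap in the treatment of the initial condition. You claim that the two-sided bounds of Theorem \ref{regularity} force $\overline u(0,\cdot)=\underline u(0,\cdot)=u_0$. The lower bound $u_\e\geq u_\e^0+r\psi(x)t-r\e DMt^2$ does yield $\underline u(0,\cdot)\geq u_0$, but the upper bound is $u_\e\leq C+ra_\infty t$ with $C$ a \emph{global} constant dominating $\sup u_\e^0$; it only gives $\overline u(0,x_0)\leq C$, not $\overline u(0,x_0)\leq u_0(x_0)$. In the half-relaxed limits method the upper envelope may in general jump above the initial datum at $t=0$ (an initial layer), which is exactly why the paper does not assert pointwise attainment but instead proves the generalized viscosity-sense initial conditions \fer{inicond1}--\fer{inicond2}. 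The missing step is the dichotomy at $t=0$: assuming $\overline u(0,x_0)>u_0(x_0)$, one must show that the touching points $(t_n,x_n,\theta_n)$ of $u_\e-\varphi-\e\eta$ satisfy $t_n>0$ — otherwise passing to the relaxed limsup in the maximum inequality at time $0$ and using \fer{as:ini2} contradicts $\overline u(0,x_0)>u_0(x_0)$ — so that the interior equation applies up to the initial time and delivers both $\overline u(0,x_0)\leq 0$ and $\p_t\varphi-D|\nabla_x\varphi|^2-H(x_0)\leq 0$. Only with \fer{inicond1}--\fer{inicond2} in hand can one invoke the strong comparison principle for semicontinuous sub- and supersolutions with initial data understood in the viscosity sense (as in \cite{GB:94,LE.PS:89}) to deduce $\overline u\leq\underline u$; the ``classical comparison principle with continuous initial data'' you appeal to is not applicable, since you have established neither continuity of $\overline u$ at $t=0$ nor that it takes the value $u_0$ there.
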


Before proving Proposition \ref{HJlim}, we first give a short proof for Lemma \ref{eigenHJ}.

\begin{proof}[\bf Proof of Lemma \ref{eigenHJ}]

Let $X = \mathcal{C}^{1,\mu}\left( \Theta \right)$ and  $K$ be the positive cone of nonnegative functions in $X$. We define $L:X\to X$ as below 
$$L (u)= - \alpha \Delta_{\theta\theta} u - r \left( a(x,\theta) - a_{\infty} \right) u.$$ 
The resolvent of $L$ together with the Neumann boundary condition is compact from the regularizing effect of the Laplace term. Moreover, the strong maximum principle gives that it is also strongly positive. Using the Krein-Rutman theorem we obtain that there exists a nonnegative eigenvalue corresponding to a positive eigenfunction. This eigenvalue is simple and none of the other eigenvalues correspond to a positive eigenfunction. This defines $H(x)$ and $Q(x,\theta)$ in \fer{ev} in a unique way. The smoothness of $H$ and $Q$  derives from the smoothness of $a(x,\theta)$ and the fact that they are principal eigenelements.

\end{proof}

\begin{proof}[\bf Proof of Proposition \ref{HJlim}.] We prove the result in two steps.\\

\textbf{a. Discontinuous stability (proof of (i)).} \\

To prove the result we need to show that $\overline u$ and $\underline u$ are respectively sub and supersolutions of \fer{varHJ}.\\

\textbf{a.1.} \underline{We prove that $\overline u\leq 0$}. \\

Suppose that there exists a point $(t,x)$ such that $\overline u (t,x) > 0$. From \fer{usup} and \fer{lipv}, there exists a sequence $\eps_n\to 0$ and a sequence of points $(t_n,x_n) \underset{n \to \infty}{\to} (t,x)$ such that, 
\begin{equation*}
u_{\e_n}(t_n,x_n,\theta) \underset{n \to \infty}{\to}  \overline u (t,x), \quad \text{uniformly in $\theta$}.
\end{equation*}
As a consequence, there exists $\delta > 0$ such that for $n$ sufficiently large, $u_{\e_n}(t_n,x_n,\theta) > \delta$, for all $\theta\in \Theta$. This implies
\begin{equation*}
\rho_{\e_n}(t_n,x_n) = \displaystyle \int_{\Theta} \exp\left( \frac{u_{\e_n}(t_n,x_n,\theta)}{\eps_n} \right) d \theta \geq \vert \Theta \vert \exp\left( \frac{\delta}{\eps_n} \right) > a_{\infty},
\end{equation*} 
for sufficiently large $n$, which is in contradiction with Lemma \ref{rhobound}.\\

\textbf{a.2.}  \label{CO} \underline{We prove that $\p_t \overline u- D |\nabla_x \overline u|^2-H\leq 0$}. \\

Now, assume that $\vp \in \mathcal{C}^2 \left( \R^+ \times \R \right)$ is a test function such that $\overline u(t,x)-\vp(t,x)$ has a strict local maximum at $(t_0,x_0)$. 

Using the eigenfunction $Q$ introduced in Lemma \ref{eigenHJ}, we can define a \textit{corrected test function} \cite{LE:89} by $\chi_{\e} = \vp(t,x) + \e \eta(x,\theta)$, with $\eta(x,\theta) = \ln \left( Q\left( x , \theta \right) \right) $. 
Using standard arguments in the theory of viscosity solutions (see \cite{GB:94}), there exists a sequence $(t_\e,x_\e,\theta_\e)$ such that the function $u_\e(t,x,\theta) - \chi_{\e}(t,x,\theta)$ takes a local maximum in $(t_\e,x_\e,\theta_\e)$, which is strict in the $(t,x)$ variables, and such that $(t_\e,x_\e) \to (t_0,x_0)$ as $\e\to 0$. Moreover, as $\theta_{\e}$ lies in the compact set $\Theta$, one can extract a converging subsequence. For legibility, we omit the extraction in the sequel.\\

Let us verify the viscosity subsolution criterion. At the point $(t_\e,x_\e,\theta_\e)$, we have:
\begin{equation*}
\begin{array}{lcl}\p_t \chi_{\e} - D |\nabla_x \chi_{\e}|^2 - H(x_\e) & = &\p_t u_{\e} - D |\nabla_x u_{\e}|^2 - H(x_\e), \medskip \\
&=& \e D\Delta_x u_\e+\f{\al}{\e} \Delta_\theta u_\e +\f{\al}{\e^2}|\nabla_\theta u_\e|^2+r (a(x_{\e},\theta)-\rho_\e) - H(x_\e), \medskip\\
&\leq& \e D\Delta_x \chi_\e + \f{\al}{\e} \Delta_\theta \chi_\e +\f{\al}{\e^2}|\nabla_\theta \chi_\e|^2 + r a(x_{\e},\theta) - H(x_\e).\\
\end{array}
\end{equation*}
We must emphasize that the Neumann boundary conditions are implicitly used here in case when $\theta_{\e}$ is on the boundary of $\Theta$. Indeed, this {ensures} that we have $\nabla_{\theta} \chi_{\e}(t_\e,x_\e,\theta_\e) = 0$ in this latter case. As a consequence, we still have $\nabla_{\theta} u_\e(t_\e,x_\e,\theta_\e) =  \nabla_{\theta} \chi_{\e}(t_\e,x_\e,\theta_\e)$ so that the first order derivative in the trait variable does not add any supplementary difficulty in the r.h.s.. Moreover the second order terms {still have}  the right sign, since again  $\Delta_{xx}^2u_\e \leq \Delta_{xx}^2 \chi_\e$ is enforced {by the Neumann boundary condition}.\\

We replace the test function by its definition to obtain
\begin{multline*}
\p_t \vp - D \vert \nabla_x \left(\vp + \e \eta\right) \vert^2 - H(x_{\e}) \leq \e D \left( \Delta_{xx}^2 \vp + \e \Delta_{xx} \eta \right) + \alpha \left( \Delta_{\theta\theta}^2 \eta + \vert \nabla_{\theta} \eta|^2 \right) + r a(x_{\e},\theta) - H(x_\e).
\end{multline*}

{Here appears the crucial importance of choosing $\eta=\ln Q$ with $Q$ the solution of the spectral problem \fer{ev}. Coupling the above equation with the spectral problem \fer{ev}, written in terms of $\eta$, we deduce that}
\begin{equation*}
\p_t \vp - D \vert \nabla_x \vp + \e \eta \vert^2 - H(x_{\e}) \; \leq \; \e D \left( \Delta_{xx}^2 \vp + \e \Delta_{xx} \eta \right).
\end{equation*}
We conclude, by letting $\e$ go to $0$, that at point $(t_0,x_0)$:
\begin{equation*}
\p_t \vp - D \vert \nabla_x \vp \vert^2 - H \leq 0.  
\end{equation*}

\textbf{a.3.}  \underline{We prove that $\max\left(\p_t \underline u- D |\nabla_x \underline u|^2-H,\underline u\right)\geq 0$}. \\

We first notice that $\underline u(t,x)\leq \overline u(t,x)\leq 0$. Let $\underline u(t,x)<0$. Then there exists some $\delta > 0$ such that along a subsequence $(\e_n,t_n,x_n)$,  
$u_{\e_n}(t_n,x_n,\theta) < - \delta $ for all $\theta\in \Theta$ and for $n\geq N$ with $N$ sufficiently large. It follows that $\rho_{\e_n}(t_n,x_n)\to 0$, as $n\to \infty$. With the same notations as in the previous point replacing maximum by minimum, we get 
\begin{equation*}
\p_t \vp - D \vert \nabla_x \vp + \e \eta \vert^2 - H(x_{\e})  \; \geq \; \e D \left( \Delta_{xx}^2 \vp + \e \Delta_{xx} \eta \right) - r \rho_{\e},
\end{equation*}
so that taking the limit $\eps \to 0$ along the subsequence $(t_{\eps_n}, x_{\eps_n})$, we obtain that 
\begin{equation*}
 \p_t \vp - D |\nabla_x \vp |^2-H \; \geq \; 0.
\end{equation*}
holds at point $(t_0,x_0)$.\\

\textbf{b. Strong uniqueness (proof of (ii)).}\\

Obviously, one cannot get any uniqueness result for the Hamilton-Jacobi equation \fer{varHJ} without {imposing} any initial condition. Adding \eqref{as:ini2}, we now check the initial condition of \eqref{varHJ2} in the viscosity sense.

One has to prove the following
\begin{equation}\label{inicond1}
\min\left(  \max\left(\p_t \overline u-D|\nabla_x \overline u|^2-H,\overline u\right),   \overline u - u_0 \right) \leq 0, \qquad \textrm{ in } \left\{ t = 0 \right\} \times \R^d,
\end{equation}
and
\begin{equation}\label{inicond2}
\max\left( \max\left(\p_t \underline u-D|\nabla_x \underline u|^2-H,\underline u\right),   \underline u - u_0 \right) \geq 0, \qquad \textrm{ in } \left\{ t = 0 \right\} \times \R^d,
\end{equation}
in the viscosity sense.

Here we give only  the proof of \fer{inicond1}, since \fer{inicond2} can be derived following similar arguments. Let $\vp \in \mathcal{C}^2 \left( \R^+ \times \R \right)$ be a test function such that $\overline u(t,x)-\vp(t,x)$ has a strict local maximum at $(t_0 = 0,x_0)$. We now prove that either
\begin{equation*} 
\overline u(0,x_0) \leq u_0(x_0),
\end{equation*}
or 
\begin{equation*} 
\begin{cases}
\p_t \vp (0,x_0) - D|\nabla_x \vp (0,x_0)|^2 - H(x_0) \leq 0, \medskip\\
\textrm{and} \medskip\\
\overline u (0,x_0) \leq 0.
\end{cases}
\end{equation*}

Suppose then that 
\beq
\label{u0x0}
\overline u(0,x_0) > u_0(x_0).
\eeq
 Following the arguments above in \textbf{a.1.}  but taking $t=0$ and using \fer{as:ini2} we obtain 
$$
\overline u (0,x_0) \leq 0.
$$
 We next prove that
\begin{equation}
\label{vpx0} 
\p_t \vp (0,x_0) - D|\nabla_x \vp (0,x_0)|^2 - H(x_0) \leq 0.
\end{equation}
There exists  a sequence $(t_\e,x_\e,\theta_\e)_\e$ such that $(t_\e,x_\e)$  tends to $(0,x_0)$ as $\e\to 0$ and  that $u_\e-\chi_\e=u_\e-\vp-\e\eta$ takes a local maximum at $(t_\e,x_\e,\theta_\e)$. Here $\eta$ still denotes the correction  $\ln Q$ with $Q$ the eigenfunction introduced in Lemma \ref{eigenHJ} (see \textbf{a.1.}). 
We first claim that there exists a subsequence $(t_n,x_n,\theta_n)_n$  of the above sequence and a subsequence  $(\e_n)_n$, with $\e_n\to 0$ as $n\to \infty$, such that
$t_n>0$, for all $n$. \\

Suppose that this is not true. Then, there exists a sequence $(\e_{n'},x_{n'},\theta_{n'})_{n'}$ such that $(\e_{n'},x_{n'})\to (0,x_0)$ and 
that $u_{\e_{n'}}-\vp-\e_{n'}\eta$ has a local maximum at $(0,x_{n'},\theta_{n'})$. It follows that, for all $(t,x,\theta)$ in some neighborhood of $(0,x_{n'},\theta_{n'})$, we have
$$
u_{\e_{n'}} \left( 0,x_{n'},\theta_{n'} \right) - \chi_{\e_{n'}}\left( 0,x_{n'},\theta_{n'} \right) \geq u_{\e_{n'}} \left( t , x , \theta \right) - \chi_{\e_{n'}}\left( t , x , \theta \right).
$$
Computing $ \underset{\underset{(t,x)\to (t_0,x_0)}{n'\to \infty}}{ \overline \limsup}$ at the both sides of the inequality, and using \eqref{as:ini2} one obtains 
 $$
 u_0 \left( x_0 \right) - \vp \left( 0 , x_0 \right) \geq \overline u \left( 0 , x_0 \right) - \vp \left( 0 , x_0 \right).
 $$
However, this is in  contradiction with \fer{u0x0}. We thus proved the existence of subsequences $(t_n,x_n,\theta_n)_n$ and $(\e_n)_n$ described above with  $t_n>0$, for all $n$.\\

Now having in hand that $t_n>0$, from \fer{ue} and the fact that $u_{\e_n}-\vp-\e_n\eta$ takes a local maximum at $(t_n,x_n,\theta_n)$, we deduce that
\begin{equation*}
\p_t \vp - D \vert \nabla_x \vp + \e_n \eta \vert^2 - H(x_{\e_n}) \; \leq  \; \e_n D \left( \Delta_{xx}^2 \vp + \e_n \Delta_{xx} \eta \right)
\end{equation*}
holds in  $(t_n,x_n,\theta_n)$.  Finally, letting $n \to +\infty$, we find \fer{vpx0}.\\

We refer to \cite{GB:94,LE.PS:89} for arguments giving \textit{strong uniqueness} (\textit{i.e.} a comparison principle for semi-continuous sub and supersolutions) for \eqref{varHJ2}. As $\overline u$ and $\underline u$ are respectively sub and supersolutions of \eqref{varHJ2}, we then know that $\overline u \leq \underline u$. From their early definition, we also have $\overline u \geq \underline u$. Gathering these inequalities, we finally obtain $u = \overline u = \underline u$ and {that  $(u_{\e})_{\eps}$ converges locally uniformly, as $\e\to 0$,}  towards $u$, the unique viscosity solution of \eqref{varHJ2} {in} $\R^+ \times \R^d \times \Theta$. 

\end{proof}
%
%
%
%
%
%

\section{Refined asymptotics (The  proof of Theorem \ref{th:main}--(ii) and (iii))}
\label{asympt}

In this section, we provide some information on the asymptotic population density. Firstly, we prove parts (ii) and (iii) of Theorem \ref{th:main} which state that the zero sets of $u$ correspond to the zones where the population is positive. Secondly, we provide the limit of $(n_\e)_\e$, as $\e\to 0$,  in a particular case (see Proposition \ref{convvarsep}). \\

We first prove the following {l}emma:
\begin{lem}\label{grate}
Let $u$ be the unique viscosity solution of \fer{varHJ2} and $H(x)$ the eigenvalue given by  Lemma \ref{eigenHJ}. Then 
\begin{equation*}
(t,x) \in \text{Int} \left \lbrace u(t,x) = 0 \right\rbrace \quad \Longrightarrow \quad H(x) \geq 0.
\end{equation*}
\end{lem}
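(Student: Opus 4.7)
The plan is to invoke the viscosity subsolution property of $u$ for the obstacle Hamilton--Jacobi equation \fer{varHJ2} at the point in question, using the constant test function $\varphi \equiv 0$. Let $(t_0,x_0) \in \text{Int}\{u(t,x)=0\}$. By definition of the interior, there exists an open neighborhood $V$ of $(t_0,x_0)$ in $(0,\infty)\times\R^d$ such that $u \equiv 0$ on $V$. Hence for $\varphi \equiv 0$, the function $u-\varphi$ is identically zero on $V$, so $(t_0,x_0)$ is (in particular) a local maximum point of $u-\varphi$.

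Since $u=\overline u$ is a viscosity subsolution of
\begin{equation*}
\max\bigl(\p_t u - D|\nabla_x u|^2 - H,\, u\bigr) = 0
\end{equation*}
by Proposition \ref{HJlim}, the subsolution criterion applied with the test function $\varphi \equiv 0$ at $(t_0,x_0)$ yields
\begin{equation*}
\max\bigl(\p_t \varphi(t_0,x_0) - D|\nabla_x \varphi(t_0,x_0)|^2 - H(x_0),\, u(t_0,x_0)\bigr) \leq 0.
\end{equation*}
Substituting $\p_t \varphi = 0$, $\nabla_x \varphi = 0$, and $u(t_0,x_0)=0$, this reduces to $\max(-H(x_0),\,0)\leq 0$, i.e. $-H(x_0)\leq 0$, which is the desired conclusion $H(x_0)\geq 0$.

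There is no real obstacle here: the argument is simply a direct reading of the viscosity subsolution inequality on the flat part $\{u=0\}$, where admissible test functions include constants. The interior hypothesis is used only to guarantee that $u-\varphi$ indeed attains a local maximum at $(t_0,x_0)$, which would fail at boundary points of $\{u=0\}$.
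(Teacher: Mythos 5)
Your argument is correct and is essentially the paper's own proof: the paper extracts the subsolution inequality $\p_t u - D|\nabla_x u|^2 \leq H$ from \fer{varHJ2} and observes that on $\text{Int}\{u=0\}$ the left-hand side vanishes ``in the strong sense'' because $u$ is locally constant, which is precisely what your constant test function $\varphi\equiv 0$ makes explicit in the viscosity framework. No gap; the two write-ups differ only in how formally the vanishing of the derivatives on the flat region is justified.
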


\begin{proof}[\bf Proof of Lemma \ref{grate}]
Thanks to \eqref{varHJ2}, 
\begin{equation*}
\partial_t u - D \vert \nabla_x u \vert^2 \leq H(x),
\end{equation*}
in the viscosity sense. In the zone $\text{Int} \left \lbrace u(t,x) = 0 \right\rbrace$, one has
\begin{equation*}
\partial_t u - D \vert \nabla_x u \vert^2 = 0,
\end{equation*}
in the strong sense. The proof of the lemma follows.
\end{proof}

We now are able to characterize the different zones of the front and complete the proof of Theorem \ref{th:main}:

\begin{proof}[\bf Proof of Theorem \ref{th:main}, (ii)]

Let  $K$ be a compact subset of $\text{Int}\left\lbrace u < 0\right\rbrace$.  The local uniform convergence of $u_\eps$ towards $u$ ensures that there exists a constant $\delta > 0$ such that for sufficiently small $\eps > 0$ and for all $(t,x)\in K$ and $\theta\in \Theta$, $u_\eps(t,x,\theta) < - \delta$. As a consequence, $n_\eps = \exp\left( \frac{u_\eps}{\eps} \right)  < \exp\left( - \frac{\delta}{\eps} \right)  \to 0 $, uniformly as $\eps \to 0$ {in} $K\times \Theta$. \\
\end{proof}

\begin{proof}[\bf Proof of Theorem \ref{th:main}, (iii)]

 Take $(t_0,x_0) \in K \subset \subset \text{Int} \left( \left\lbrace u = 0\right\rbrace \cap \left\lbrace H(x) > 0 \right\rbrace \right)$, and let $Q$ be the normalized eigenvector given by Lemma  \ref{eigenHJ}. We denote $C_m = C_m(x_0)=\min_{\Theta} Q(x_0,\theta)$ and $C_M=C_M(x_0) = \max_{\Theta} Q(x_0,\theta)$. We also define \begin{equation*}
F_\eps(t,x) := \int_\Theta n_\eps(t,x,\theta) Q(x_0,\theta) d \theta,\qquad I_\e:=\e\ln F_\e.
\end{equation*}
Using classical arguments, one can prove that $I:=\lim_{\e\to 0}I_\e$ is well-defined and nonpositive.
We also point out that $\{u=0\}=\{I=0\}$ since
\begin{equation*}
\min_\theta u_\e( t,x,\theta) + o(1) \leq  I_\eps(t,x) \leq \max_\theta u_\e( t,x,\theta) +o(1),
\end{equation*} 
for all $(t,x)\in [0,\infty)\times \R^d$. Multiplying equation \eqref{main2} by $Q(x_0,\theta)$ and integrating in $\theta$ yields
\begin{equation*}
\eps \partial_t F_\eps - \eps^2 D\Delta_x F_\eps - \al\int_\Theta n_\eps \Delta_\theta Q(x_0,\theta) =r \int_\Theta a(x,\theta)Q(x_0 , \theta) n_\eps (t,x,\theta) d\theta - r \rho_\eps F_\eps.
\end{equation*}
Combining the above equation by \fer{ev} we deduce that
\begin{equation*}
\eps \partial_t F_\eps - \eps^2D \Delta_x F_\eps = \left( H(x_0) -r \rho_\eps \right) F_\eps +r \int_\Theta Q(x_0 , \theta) n_\eps (t,x,\theta) \left[ a(x,\theta) - a(x_0,\theta) \right] d\theta.
\end{equation*}
Since $H(x_0)>0$ and $a$ is continuous,  for all $\delta > 0$, one can choose a constant $r>0$ such that 
\begin{equation*}
\forall x \in B_r(x_0), \qquad \vert  a(x,\theta) - a(x_0,\theta)  \vert < \delta H(x_0)\quad \text{with $B_r(x_0) \subset K$}.
\end{equation*}
We finally deduce that for all $ x \in B_r(x_0)$,
\begin{equation*}
\eps \partial_t F_\eps - \eps^2D \Delta_x F_\eps \geq \left( (1 - \delta)H(x_0) - r \rho_\eps \right)F_\eps.
\end{equation*}
Since $F_\eps \geq C_m \rho_\eps$, it follows that  for all $ x \in B_r(x_0)$,
\begin{equation}\label{superF}
\eps \partial_t F_\eps - \eps^2 D \Delta_x F_\eps \geq \left( (1 - \delta)H(x_0) - \frac{ rF_\eps}{C_m} \right)F_\eps.
\end{equation}
Moreover, since $(t_0,x_0)\subset \mathrm{Int}\{ u(t,x)=0\}=\mathrm{Int}\{ I(t,x)=0\}$, we have $I(t,x)=0$ in a neighborhood of $(t_0,x_0)$.\\

We then apply an argument similar to the one used in \cite{LE.PS:89} to prove an analogous statement for the Fisher-KPP equation.
To this end, we introduce the following test function
\begin{equation*}
\varphi(t,x) =- \vert x - x_0 \vert^2 - (t-t_0)^2.
\end{equation*}
As $I-\varphi$ attains a strict minimum in $(t_0,x_0)$, there exists a sequence $(t_\eps,x_\eps)$ of points such that and 
$I_\e-\varphi$ attains a minimum in $(t_\e,x_\e)$, with $(t_\eps,x_\eps) \to (t_0,x_0)$. It follows that
\begin{equation*}
\partial_t \varphi - \eps D\Delta_x \varphi - D\vert \nabla_x \varphi \vert^2 \geq \partial_t I_\eps - \eps D\Delta_x I_\eps - D\vert \nabla_x I_\eps \vert^2 = (1 - \delta)H(x_0)-\frac{rF_\eps}{C_m} .
\end{equation*}
As a consequence, 
\begin{equation*}
\liminf_{\eps \to 0} F_\eps(t_0,x_0) \geq \f{C_m}{r} (1 - \delta)H(x_0),
\end{equation*}
uniformly with respect to points $(t_0,x_0)\in K$ and this gives 
\begin{equation*}
\liminf_{\eps \to 0} \rho_\eps (t_0, x_0) \geq (1-\delta) H(x_0) \frac{C_m}{rC_M}.
\end{equation*}
We then let $\delta \to 0$ and obtain 
\begin{equation*}
\liminf_{\eps \to 0} \rho_\eps (t_0, x_0) \geq  H(x_0)  \frac{C_m(x_0)}{rC_M(x_0)},
\end{equation*}
uniformly with respect to points $(t_0,x_0)\in K$.\\

Let 
$$
\widetilde K=\{x\;|\; \exists t \geq 0, \text{ such that} \;(t,x)\in K\}.
$$
To conclude the proof, it is enough to prove that there exists a constant ${\overline C}={\overline C}(\al,r,a\vert_{\widetilde K\times \Theta})\geq 1$, such that
$$ \frac{C_M(x)}{C_m(x)}\leq {\overline C}, \qquad \text{for all $x\in \widetilde K$}.$$
This is indeed a consequence of the Harnack inequality \cite{JB.MS:04} for the solutions of \fer{ev} in $\Theta$ for all $x\in \widetilde K$. 
We point out that here we can use the Harnack inequality on the whole domain $\Theta$ thanks to the Neumann boundary condition.
\end{proof}

%

The above result is not enough to identify the limit of $(n_\e)_\e$ as $\e\to 0$, as was obtained for example for Fisher-KPP type models in \cite{LE.PS:89}. The main difficulties to obtain such limits are the facts that we do not have any regularity estimate in the $x$ direction on $n_\e$ and that there is no comparison principle for this model due to the non-local term. However, we were able to identify the limit of  $(n_\e)_\e$ in a particular case:\\

\begin{prop}\label{convvarsep}
Suppose that $Q$, the eigenvector given by \eqref{ev}, does not depend on $x$, i.e. $Q(x,\theta)=Q(\theta)$. Let the initial data be of the following form
\begin{equation}
\label{propin}
n_{\eps}(t=0,x,\theta) = m_{\eps}(x)Q(\theta), \qquad m_\e(x)\geq 0.
\end{equation}
Then:
\begin{itemize}
\item[(i)] For all $t>0$ and $(x,\theta) \in \R^d \times \Theta$, \quad $n_{\eps}(t,x,\theta) = m_{\eps}(t,x)Q(\theta).$
\item[(ii)] For all $(t,x,\theta) \in \left\lbrace u(t,x) = 0 \right\rbrace \times \Theta$, \quad $\lim_{\eps \to 0} n_\eps(t,x,\theta) = \frac{H(x)}{r} Q(\theta)$.
\end{itemize}

\end{prop}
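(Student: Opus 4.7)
The plan is to exploit the separability hypothesis to reduce the non-local equation \eqref{main2} to a scalar local Fisher-KPP equation in $(t,x)$ only, and then invoke classical Evans--Souganidis theory \cite{LE.PS:89} for its Hopf-Cole analysis.

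For part (i), the key observation is that the assumption $Q(x,\theta)=Q(\theta)$, combined with the eigenvalue equation \eqref{ev}, forces the growth rate to be additively separable: dividing \eqref{ev} by $Q(\theta)$ yields
$$a(x,\theta) \;=\; \frac{H(x)}{r} \;-\; \frac{\al \Delta_{\theta\theta} Q(\theta)}{r\, Q(\theta)} \;=:\; \frac{H(x)}{r} + g(\theta).$$
I would then plug the ansatz $n_\e(t,x,\theta)=m_\e(t,x)\,Q(\theta)$ into \eqref{main2}. Using $\rho_\e(t,x)=m_\e(t,x)\int_\Theta Q\,d\theta=m_\e(t,x)$ thanks to \eqref{normQ}, together with the identity $\al Q''/Q = H(x)-r\,a(x,\theta)$ above, a direct computation shows that every $\theta$-dependence cancels in the resulting equation, and the ansatz solves \eqref{main2} if and only if $m_\e$ solves the scalar local Cauchy problem
$$\e\,\p_t m_\e \;=\; \e^2 D\,\Delta_x m_\e + m_\e\bigl(H(x) - r m_\e\bigr), \qquad m_\e(0,x)=m_\e^0(x).$$
Uniqueness for the Cauchy problem associated to \eqref{main2} at $\e$ fixed then gives (i).

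For part (ii), I would perform a second Hopf-Cole transform $v_\e:=\e \ln m_\e$, yielding
$$\p_t v_\e \;=\; \e D\,\Delta_x v_\e + D|\nabla_x v_\e|^2 + H(x) - r\, e^{v_\e/\e}.$$
This is exactly a local Fisher-KPP equation in the Evans--Souganidis regime. Repeating the half-relaxed limits argument of Section \ref{conv} --- which is considerably simpler here since there is neither a trait variable nor a spectral corrector to introduce --- one obtains that $v_\e$ converges locally uniformly to the unique viscosity solution $v$ of $\max(\p_t v - D|\nabla_x v|^2 - H, v) = 0$ with $v(0,\cdot)=u_0$. Since $u_\e = v_\e + \e \ln Q(\theta)$ with $\ln Q$ bounded on the compact set $\Theta$, passing to the limit gives $u=v$ by the uniqueness part of Proposition \ref{HJlim}. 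The Evans--Souganidis analysis then identifies the limit of $m_\e$ itself: on the interior of $\{v=0\}\cap\{H>0\}$, $m_\e$ converges to the stable state $H(x)/r$ of the local reaction, which yields the announced limit $n_\e(t,x,\theta)\to (H(x)/r)\,Q(\theta)$.

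The main obstacle is really the first step --- recognizing that the separation hypothesis automatically enforces the additive structure of $a$, and verifying algebraically that the ansatz closes into a scalar equation. Once the problem is reduced to a \emph{local} Fisher-KPP equation, the Hamilton-Jacobi limit and the sharper identification of $m_\e$ on the zero-set of $v$ are direct applications of the Evans--Souganidis machinery, and the absence of a non-local term in the reduced equation removes precisely the technical obstruction (lack of comparison principle) responsible for the loss of information at the back of the front in the general case discussed after the statement of Theorem \ref{th:main}.
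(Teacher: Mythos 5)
Your proposal is correct and follows essentially the same route as the paper: reduce to the scalar Fisher--KPP equation $\e\p_t m_\e-\e^2D\Delta_x m_\e=m_\e(H(x)-rm_\e)$ via the ansatz $n_\e=m_\e(t,x)Q(\theta)$ together with \eqref{normQ} and uniqueness for the Cauchy problem \eqref{main2}, then invoke Lemma \ref{grate} and the Evans--Souganidis analysis to identify the limit of $m_\e=\rho_\e$ on the zero set of $u$. Your preliminary observation that the hypothesis on $Q$ forces $a(x,\theta)=H(x)/r+g(\theta)$ is a pleasant converse of the paper's remark following the proposition, but it is not needed for the argument.
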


\begin{rmk}
We note that the assumption on $Q$ in Proposition \ref{convvarsep}, is satisfied for $a(x,\theta)=a(\theta)+b(x)$.
\end{rmk}

\begin{proof}[ \bf Proof of Proposition \ref{convvarsep}]

Let $m_\e$ be the unique solution of the following equation
$$
\begin{cases}
\e \p_t m_\e - \e^2 D \Delta_x m_\e =r m_\e \left( H(x)- m_\e\right),\\
m_\e(0,x)=m_\e(x).
\end{cases}
$$
Define 
$$
\widetilde n_\e(t,x,\theta) : = m_\e(t,x) Q(\theta).
$$
We notice from \fer{normQ} that
$$
\int \widetilde n_\e(t,x,\theta) d\theta= m_\e(t,x).
$$
Consequently, from \fer{propin}, \fer{ev}, and the definition of $m_\e$ one can easily verify that $\widetilde n_\e$ is a solution of \fer{main2}, and since \fer{main2} has a unique solution we conclude that
$$
n_\e(t,x,\theta)= m_\e(t,x) Q(\theta),
$$
and 
$$
\rho_\e(t,x)= m_\e(t,x). 
$$
As a consequence $\rho_\eps$ satisfies the following Fisher-KPP equation
\begin{equation*}
\eps \partial_t \rho_\eps - \eps^2 D \Delta_x \rho_\eps = r \rho_\eps \left( H(x) - \rho_\eps \right) . 
\end{equation*}
Let $(t,x) \in \left\lbrace u = 0 \right\rbrace$. Then from Lemma \ref{grate}, we obtain $H(x)\geq 0$.
Hence, from the above equation and \fer{rhoinf}, following similar arguments as in \cite{LE.PS:89} (page 157) we obtain that $\rho_\e(t,x)\to H(x)$ as $\e\to 0$, and (ii) follows.
\end{proof}

\section{Qualitative properties}\label{Qualitative}

In this {s}ection, we provide some estimates on the effective Hamiltonian $H$ and the eigenfunction $Q$. We note that  the spatial propagation of the population can be described using $H$ through \fer{varHJ2}. In particular, if $H(x)=H$ is constant and if initially the population is restricted to a compact set in space, then the population propagates in space with the constant speed  $c=2\sqrt{H}$. Furthermore, the eigenfunction $Q$ is expected to represent the phenotypical distribution of the population (see Proposition \ref{convvarsep}).\\

We begin by presenting some qualitative estimates on the effective Hamiltonian $H$.
\begin{lem}\label{estH}
The eigenvalue and normalized eigenfunction introduced in {Lemma \ref{eigenHJ}} satisfy the following estimates:
\beq
\label{H1}
\forall x \in \R, \quad H(x) =r \int_{\Theta} a(x,\theta) Q(x,\theta) d \theta,
\eeq
\begin{equation}
\label{H2}
\forall x \in \R, \quad \frac{r}{\Theta} \int_{\Theta} a(x,\theta) d \theta \leq H(x) \leq ra(x, \bar \theta (x) )
\end{equation}
where $\bar\theta$ is a trait which maximizes $Q(x,\cdot)$: $Q(x,\bar \theta (x)) = \max_{\Theta} Q(x,\theta)$.
\end{lem}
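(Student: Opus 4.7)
\medskip

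\noindent\textbf{Proof plan for Lemma \ref{estH}.}

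The identity \eqref{H1} is the easiest part: I would integrate the spectral equation \eqref{ev} over $\Theta$. The Neumann boundary condition makes $\int_\Theta \Delta_\theta Q\, d\theta = 0$ by the divergence theorem, while the normalization \eqref{normQ} gives $\int_\Theta Q(x,\theta)\, d\theta = 1$. One immediately reads off
\begin{equation*}
H(x) = r \int_\Theta a(x,\theta)\, Q(x,\theta)\, d\theta.
\end{equation*}

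For the upper bound in \eqref{H2}, I would evaluate \eqref{ev} pointwise at $\bar\theta(x)$, the maximizer of $Q(x,\cdot)$ on $\Theta$. At such a point, the Hessian of $Q$ in $\theta$ is negative semi-definite, so $\Delta_\theta Q(x,\bar\theta(x)) \leq 0$. If $\bar\theta(x)$ lies on $\partial\Theta$, the Neumann condition together with the smoothness and convexity of $\Theta$ still forces this inequality (one can reflect $Q$ across the boundary by Neumann to view $\bar\theta(x)$ as an interior maximum of the reflected function). Plugging into \eqref{ev} and using $Q(x,\bar\theta(x)) > 0$ (from Lemma \ref{eigenHJ}) yields
\begin{equation*}
H(x)\, Q(x,\bar\theta(x)) = \alpha \Delta_\theta Q(x,\bar\theta(x)) + r a(x,\bar\theta(x))\, Q(x,\bar\theta(x)) \leq r a(x,\bar\theta(x))\, Q(x,\bar\theta(x)),
\end{equation*}
which gives $H(x) \leq r a(x,\bar\theta(x))$.

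For the lower bound, I would rely on the variational (Rayleigh) characterization of the principal eigenvalue. Setting $L\phi := -\alpha \Delta_\theta \phi - r a(x,\cdot)\phi$ with Neumann boundary conditions, the spectral equation \eqref{ev} reads $L Q = -H(x) Q$, and since $Q$ is strictly positive it is the principal eigenfunction, so $-H(x)$ is the principal eigenvalue of $L$. The Rayleigh quotient thus gives
\begin{equation*}
-H(x) = \inf_{\phi \in H^1(\Theta)\setminus\{0\}} \frac{\int_\Theta \left( \alpha \vert \nabla_\theta \phi \vert^2 - r a(x,\theta) \phi^2 \right) d\theta}{\int_\Theta \phi^2\, d\theta}.
\end{equation*}
Plugging in the constant test function $\phi \equiv 1$ (which is admissible under Neumann boundary conditions) produces
\begin{equation*}
-H(x) \leq \frac{-r \int_\Theta a(x,\theta)\, d\theta}{\vert \Theta \vert},
\end{equation*}
that is, $H(x) \geq \frac{r}{\vert \Theta \vert} \int_\Theta a(x,\theta)\, d\theta$, completing \eqref{H2}.

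The only mildly delicate step is justifying $\Delta_\theta Q(x,\bar\theta(x)) \leq 0$ when the maximizer hits $\partial\Theta$; everything else is a direct computation from \eqref{ev} and the normalization \eqref{normQ}, plus the classical variational principle for the Krein--Rutman eigenvalue already invoked in the proof of Lemma \ref{eigenHJ}.
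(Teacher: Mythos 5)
Your proof of \eqref{H1} and of the upper bound in \eqref{H2} is essentially the paper's argument: the paper also integrates \eqref{ev} against the normalization \eqref{normQ} for the identity, and for the upper bound it evaluates at the maximizer of $Q$ (working with $\eta=\ln Q$, so that $\nabla_\theta\eta=0$ and $\Delta_\theta\eta\le 0$ there, which is the same computation as yours after dividing by $Q(x,\bar\theta)>0$). The boundary-maximizer issue you flag is real but is glossed over in the paper as well; with the Neumann condition one gets $\partial_{nn}Q\le 0$ and a nonpositive tangential Laplacian at a boundary maximum, so $\Delta_\theta Q(x,\bar\theta)\le 0$ still holds, and your reflection remark is one way to package this. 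Where you genuinely diverge is the lower bound: the paper rewrites \eqref{ev} as the Riccati-type identity $H(x)=\alpha\left(\Delta_\theta\eta+\vert\nabla_\theta\eta\vert^2\right)+ra(x,\theta)$ for $\eta=\ln Q$, integrates over $\Theta$, kills $\int_\Theta\Delta_\theta\eta$ by the Neumann condition, and drops the nonnegative term $\int_\Theta\vert\nabla_\theta\eta\vert^2$; you instead invoke the Rayleigh quotient for the self-adjoint operator $-\alpha\Delta_\theta-ra(x,\cdot)$ and test with $\phi\equiv 1$. Both are correct. The paper's route is self-contained (it reuses the same identity \eqref{eqeta} for both inequalities and needs no variational structure), while yours requires the additional observations that the operator is symmetric and that the strictly positive eigenfunction forces $-H(x)$ to be the bottom of the spectrum -- in exchange it is more flexible (any test function yields a lower bound on $H$) and arguably more standard. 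One small caution: Lemma \ref{eigenHJ} is proved via Krein--Rutman, not via a variational principle, so if you use the Rayleigh characterization you should say explicitly that it applies because the operator is self-adjoint and positive eigenfunctions characterize the principal eigenvalue.
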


In particular, the eigenvalue $H(x)$, which more or less represents the speed of the front, is not necessarily given by the most privileged individuals, that is those having the largest fitness $a$. See Example \ref{Schro} for a case where the inequality is strict. This property confirms that the front may slow down due to very unfavorable traits. 
\begin{proof}[\bf Proof of Lemma \ref{estH}.]

By integrating \fer{ev} with respect to $\theta$ and using the Neumann boundary condition and \fer{normQ}, we find \fer{H1}.\\

To prove \fer{H2} we rewrite \fer{ev} in terms of $\eta = \ln Q$:
\begin{equation}\label{eqeta}
\forall (x,\theta) \in \R \times \Theta, \qquad H(x) = \alpha \left( \Delta_{\theta\theta} \eta + \vert \nabla_{\theta} \eta|^2 \right) + r a(x,\theta)\end{equation}
Then, integrating and using the Neumann boundary conditions in the variable $\theta$ for $\eta$, one obtains
\begin{equation*}
H(x) \geq \frac{r}{\vert \Theta \vert} \int_{\Theta} a(x,\theta) d \theta.
\end{equation*}
Let $Q(x,\bar \theta (x)) = \max_{\Theta} Q(x,\theta)$. Then $\nabla_{\theta} \eta (x,\bar \theta(x)) = 0$ and $\Delta_{\theta} \eta (x, \bar \theta(x)) \leq 0$.  Evaluating \eqref{eqeta} in $\bar \theta(x)$, we get
\begin{equation*}
H(x) \leq ra(x, \bar \theta (x) ).
\end{equation*}
\end{proof}

\begin{lem}\label{conczero}
Let $a(x,\cdot)$ be a strictly concave function on $ \Theta := [\theta_m , \theta_M]$ for all $x \in \R$. Then for all $x \in \R$, the maximum of $Q(x,\cdot)$ is attained in only one point $\bar \theta (x)$.
\end{lem}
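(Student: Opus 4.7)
The plan is to exploit the elliptic equation \eqref{ev} at critical points of $Q(x,\cdot)$ to convert the geometric information (local extrema) into pointwise comparisons between $a(x,\cdot)$ and $H(x)/r$, and then derive a contradiction from strict concavity.

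First I would record the fundamental observation that at any $\theta^{\star}\in\Theta$ where $\partial_\theta Q(x,\theta^{\star})=0$, the eigen-equation \eqref{ev} reduces to
\begin{equation*}
\alpha\,\partial_{\theta\theta}Q(x,\theta^{\star}) = \bigl(H(x)-r\,a(x,\theta^{\star})\bigr)Q(x,\theta^{\star}),
\end{equation*}
and since $Q>0$, the sign of $\partial_{\theta\theta}Q(x,\theta^{\star})$ is the sign of $H(x)-r\,a(x,\theta^{\star})$. Hence at any interior local maximum of $Q(x,\cdot)$ one gets $H(x)\le r\,a(x,\theta^{\star})$, and at any interior local minimum $H(x)\ge r\,a(x,\theta^{\star})$. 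For extrema located at $\theta_m$ or $\theta_M$ the Neumann condition \eqref{ev} automatically provides $\partial_\theta Q=0$, so the same relation holds at the boundary (with the appropriate inequality for a local maximum obtained by a Taylor expansion using $\partial_\theta Q=0$ together with the sign of $\partial_{\theta\theta}Q$ read off from the equation).

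Next I would argue by contradiction. Suppose $Q(x,\cdot)$ attains its maximum at two distinct points $\theta_1<\theta_2$ in $\Theta$. If $Q(x,\cdot)$ is not constant between them, then by continuity on a compact interval there exists a local minimum $\theta_0\in(\theta_1,\theta_2)$, and the step above yields
\begin{equation*}
a(x,\theta_0)\ \le\ \frac{H(x)}{r}\ \le\ \min\bigl(a(x,\theta_1),a(x,\theta_2)\bigr).
\end{equation*}
But since $\theta_0\in(\theta_1,\theta_2)$, strict concavity of $a(x,\cdot)$ gives
\begin{equation*}
a(x,\theta_0)\ >\ \min\bigl(a(x,\theta_1),a(x,\theta_2)\bigr),
\end{equation*}
which is the desired contradiction. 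To exclude the degenerate subcase where $Q(x,\cdot)$ is constant on some subinterval $[\theta_1,\theta_2]$, I would plug $\partial_{\theta\theta}Q\equiv 0$ into \eqref{ev} to force $a(x,\theta)\equiv H(x)/r$ on that subinterval, which contradicts strict concavity of $a(x,\cdot)$.

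The main (mild) technical subtlety will be the boundary case: ensuring that a maximum at $\theta_m$ or $\theta_M$ is handled properly, so that there is still a genuine interior local minimum strictly between the two maxima on which the sign relation and concavity can be applied. This is taken care of by the Neumann condition, which guarantees that $\partial_\theta Q$ vanishes at the boundary and therefore that the second-derivative sign analysis carries over unchanged; the interior local minimum between any two maxima (boundary or not) is automatic from continuity on the compact interval $[\theta_m,\theta_M]$.
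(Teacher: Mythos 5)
Your proof is correct, but it follows a genuinely different route from the paper's. The paper argues globally: strict concavity of $a(x,\cdot)$ makes $H(x)-ra(x,\cdot)$ strictly convex, hence it has at most two zeros on $[\theta_m,\theta_M]$ (and at least one by \eqref{H2}); a case analysis on the number of zeros, combined with the resulting convex/concave profile of $Q$ and the Neumann conditions, shows $Q(x,\cdot)$ is either monotone up to $\theta_M$ or attains its maximum inside the middle interval where it is strictly concave. You instead argue locally by contradiction: two maximum points force an interior local minimum between them (or a flat stretch), and reading the sign of $\alpha\,\partial_{\theta\theta}Q=(H-ra)Q$ at these critical points gives $a(x,\theta_0)\le H(x)/r\le\min(a(x,\theta_1),a(x,\theta_2))$, which strict concavity forbids; the Taylor-expansion treatment of boundary maxima via the Neumann condition and the constant-subinterval subcase are both handled correctly. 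Your argument is shorter, independent of Lemma \ref{estH}, and in fact only uses strict quasi-concavity of $a(x,\cdot)$, so it is slightly more general; the paper's argument costs more but yields extra structural information (the monotonicity profile of $Q$ and the location of the unique maximum, e.g.\ at $\theta_M$ in the one-zero case) that is exploited in the subsequent qualitative discussion and examples.
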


\begin{proof}[\bf Proof of Lemma \ref{conczero}]
The concavity hypothesis implies that for all $x \in \R$, the function $H(x) - ra(x,\cdot)$ is strictly convex. Thus, on the interval $[\theta_m , \theta_M]$, it has at most two zeros. The case of no zeros is excluded from \fer{H2}. Let's study the two remaining cases. 

Suppose it has only one zero {at} $\hat\theta$ (see Example \ref{toads}), say it is positive on $\left[ \theta_m , \hat \theta \right[ $ and nonpositive on $\left[ \hat \theta , \theta_M \right[ $. Then from the early definition of the spectral problem, $Q(x,\cdot)$ is convex on $\left[ \theta_m , \hat \theta \right[ $ and concave on $\left[ \hat \theta , \theta_M \right[ $. The Neumann boundary conditions enforce that $Q(x,\cdot)$ is increasing on $\Theta$, and attains its maximum at $\bar \theta = \theta_{M}$.

Suppose it has two zeroes, {at}  $\hat\theta_1$ and  $\hat\theta_2$. Then $H(x) - r a(x,\cdot)$ is nonnegative on $\left[ \theta_m , \hat \theta_1 \right[ $ and $\left[ \hat  \theta_2 , \theta_M \right[ $, and negative on $\left[ \hat \theta_1 , \hat \theta_2 \right[ $. As a consequence, by the same convexity analysis as in the previous case, $Q(x,\cdot)$ attains its maximum on  $\left[ \hat \theta_1 , \hat \theta_2 \right[ $, where it is strictly concave, which justifies the existence and uniqueness of $\bar \theta$.

\end{proof}
\begin{rmk}{{\bf (Limit as $\al\to 0$).}}\label{alphazero}

As the mutation rate $\alpha$ goes to $0$, we expect that the eigenfunction $Q_\al$ converges towards a sum of Dirac masses. To justify this, we use again a WKB ansatz, setting $\varphi_\alpha = \sqrt{\alpha} \ln \left( Q_\alpha \right)$. Rewriting \fer{ev} in terms of $\varphi_\alpha$ we obtain
\begin{equation*}
\sqrt{\alpha} \Delta_\theta \varphi_{\alpha}+\vert \nabla_\theta \varphi_\al \vert^2 + r a(x,\theta) - H_\alpha(x)  = 0.
\end{equation*}
It is classical that the family $\vp_\al$ is equi-Lipschitz and we can extract a subsequence that converges uniformly. We have indeed that as $\al\to 0$, $(\vp_\al,H_\al)$ converges to $(\vp,H)$, with $\vp$ a viscosity solution of the following equation
\begin{equation*}
\begin{cases}
\vert \nabla_\theta \varphi \vert^2  + r a(x,\theta) - H(x)  = 0,\medskip\\
H(x)=\max_{\theta\in \Theta} ra(x,\theta).
\end{cases}
\end{equation*}
Moreover from \fer{normQ} we obtain that 
\begin{equation*}
\max_{\theta \in \Theta} \vp(x,\theta)=0.
\end{equation*}
Finally, we conclude from the above equations that as $\al\to 0$, $Q_\al \,{\xrightharpoonup{\quad} }  \;Q$ with $Q$ a measure satisfying 
\begin{equation*}
\mathrm{supp}\; Q(x,\cdot)\subset \{\theta\in\Theta \,\vert\, \vp(x,\theta)=0 \} 
\subset \{\overline \theta\in\Theta \,\vert\, H(x)={r}a(x,\overline \theta)={r}\max_{\theta\in \Theta} a(x,\theta) \}.
\end{equation*}
In other terms, in the limit of rare mutations, the population concentrates on the maximum points of the fitness $a(x,\theta)$.
\end{rmk}

\section{Examples and numerics}\label{Examples}
\subsection{Examples of spectral problems}

In this section, we present various spectral problems to discuss the properties of the principal eigenfunction $Q$ depending on the form of the fitness $a$. The principal eigenfunction $Q$ is expected, at least in some cases, to represent the asymptotic phenotypic distribution of the population (see Proposition \ref{convvarsep}). The examples are illustrated in Tables 1 and 2.

\begin{table}[h]\label{table1}
\begin{center}
\begin{tabular}{| l | c | c | c |}
\hline
  & Exemple 1 &  {Exemple 2 ($\theta_m=0.5$)} & {Exemple 2 ($\theta_m= 0.75$)}\\
  \hline
  (1) & \includegraphics[width = 0.3\linewidth]{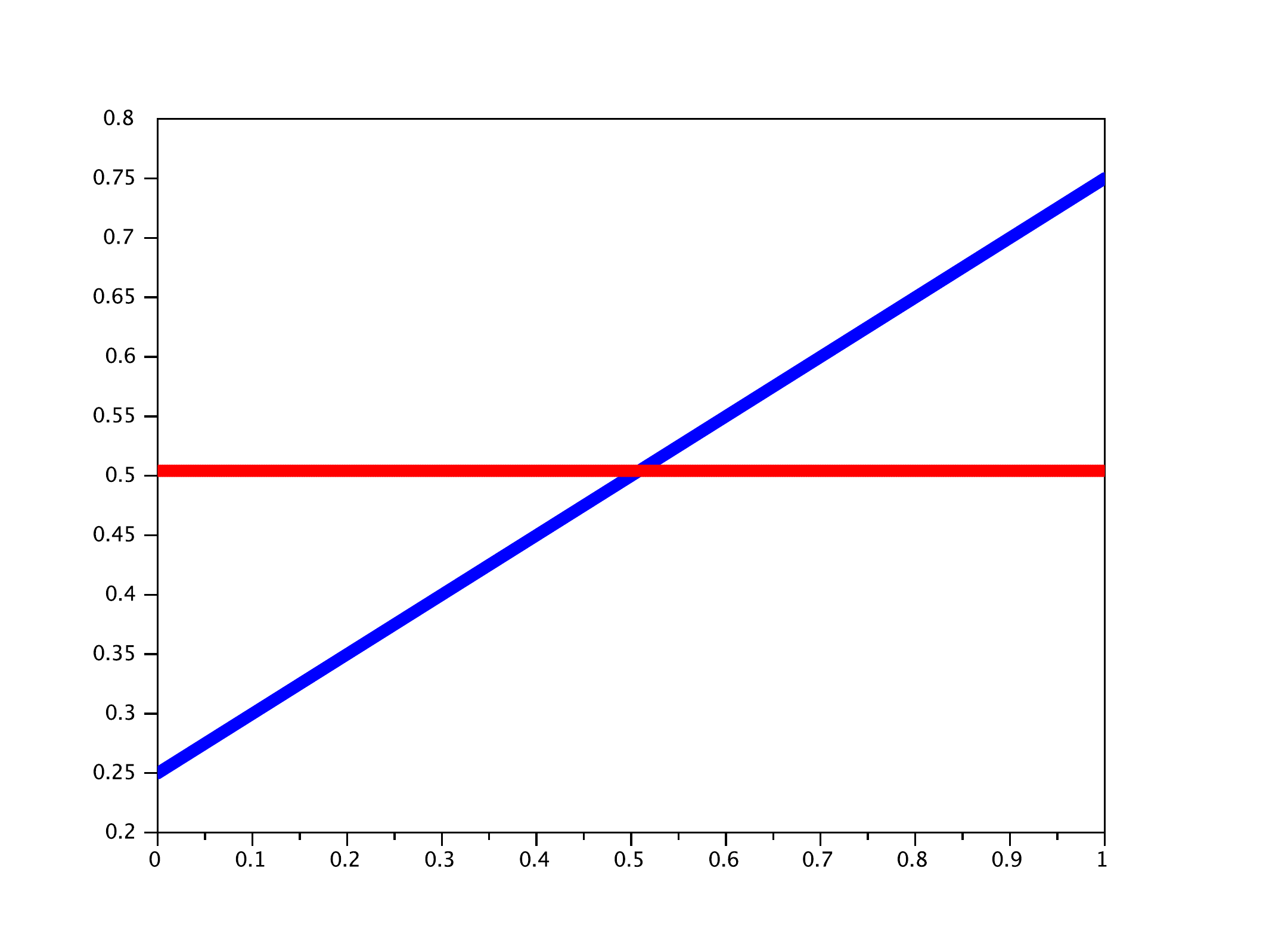}
 & 
\includegraphics[width = 0.3\linewidth]{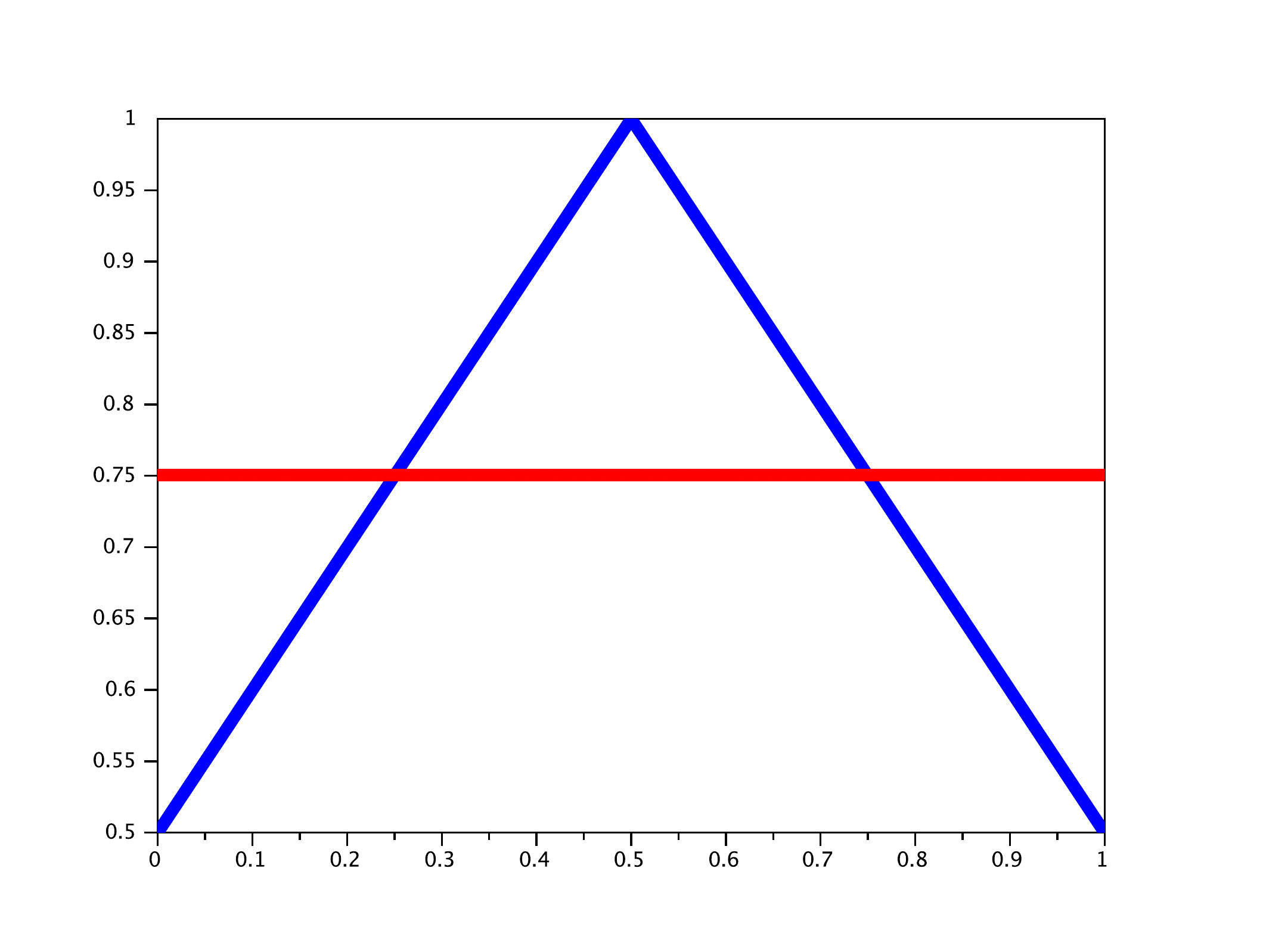} 
  & 
\includegraphics[width = 0.3\linewidth]{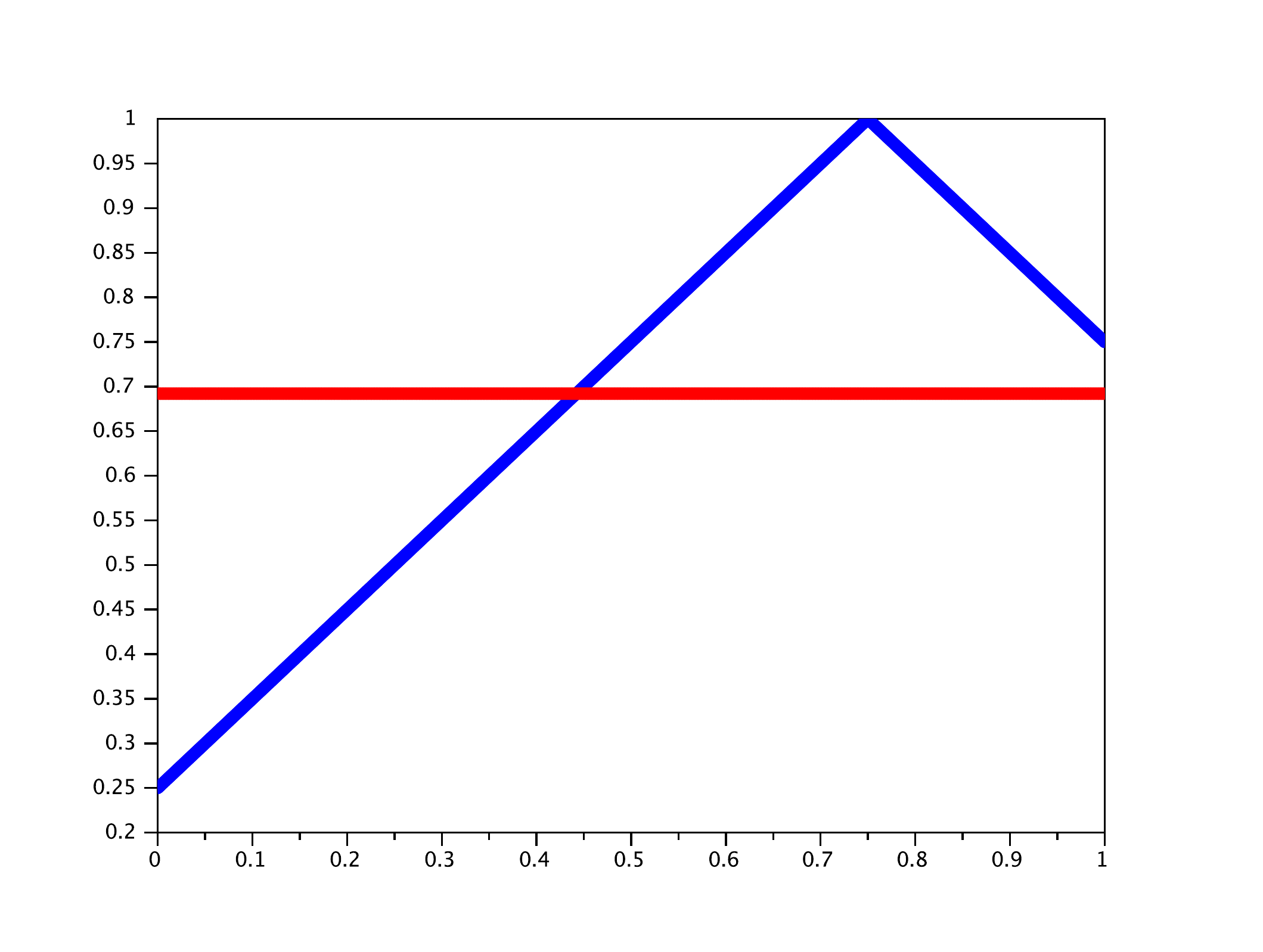}\\
  \hline
  (2) & 
  \includegraphics[width = 0.3\linewidth]{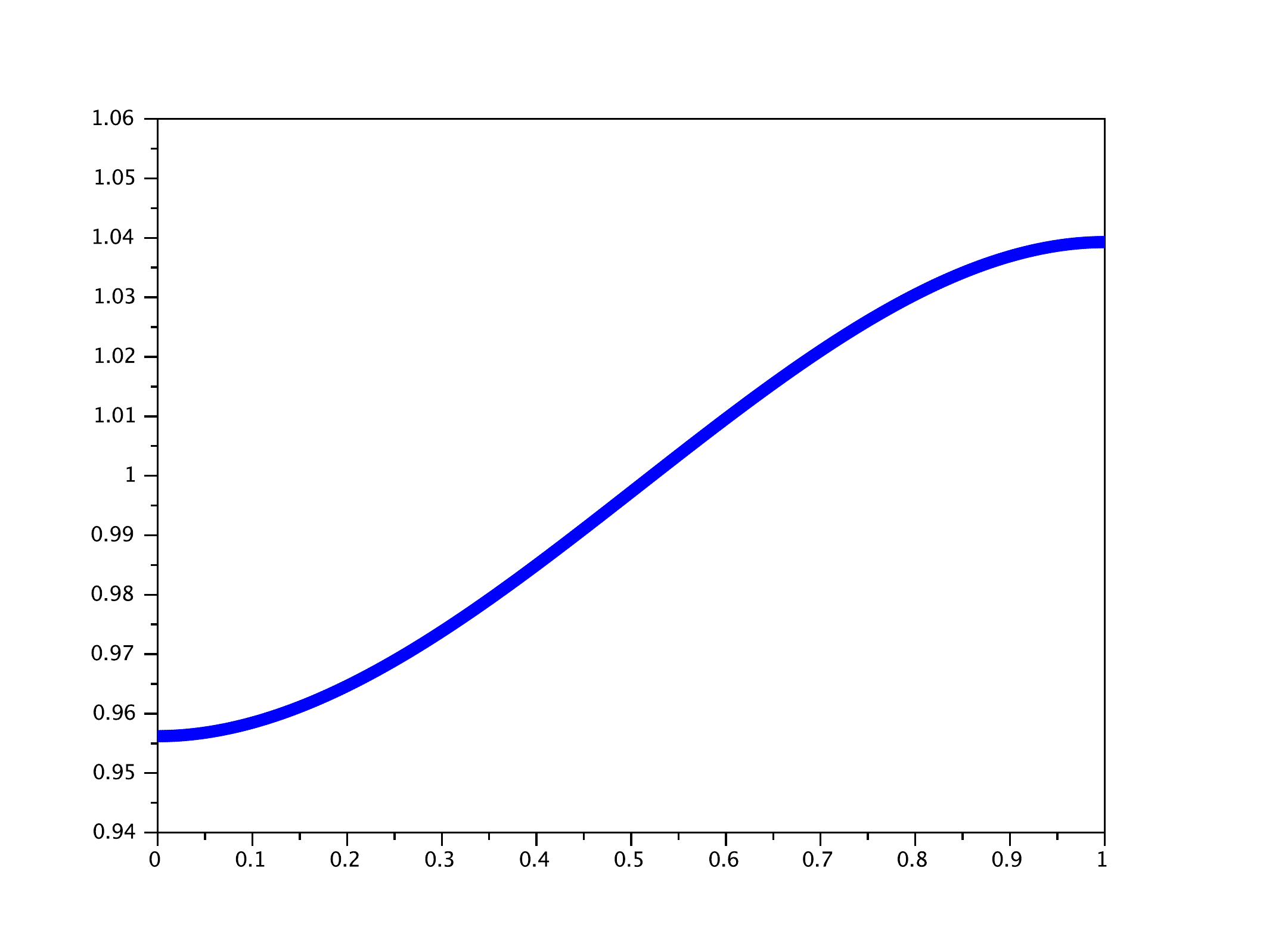}
& \includegraphics[width = 0.3\linewidth]{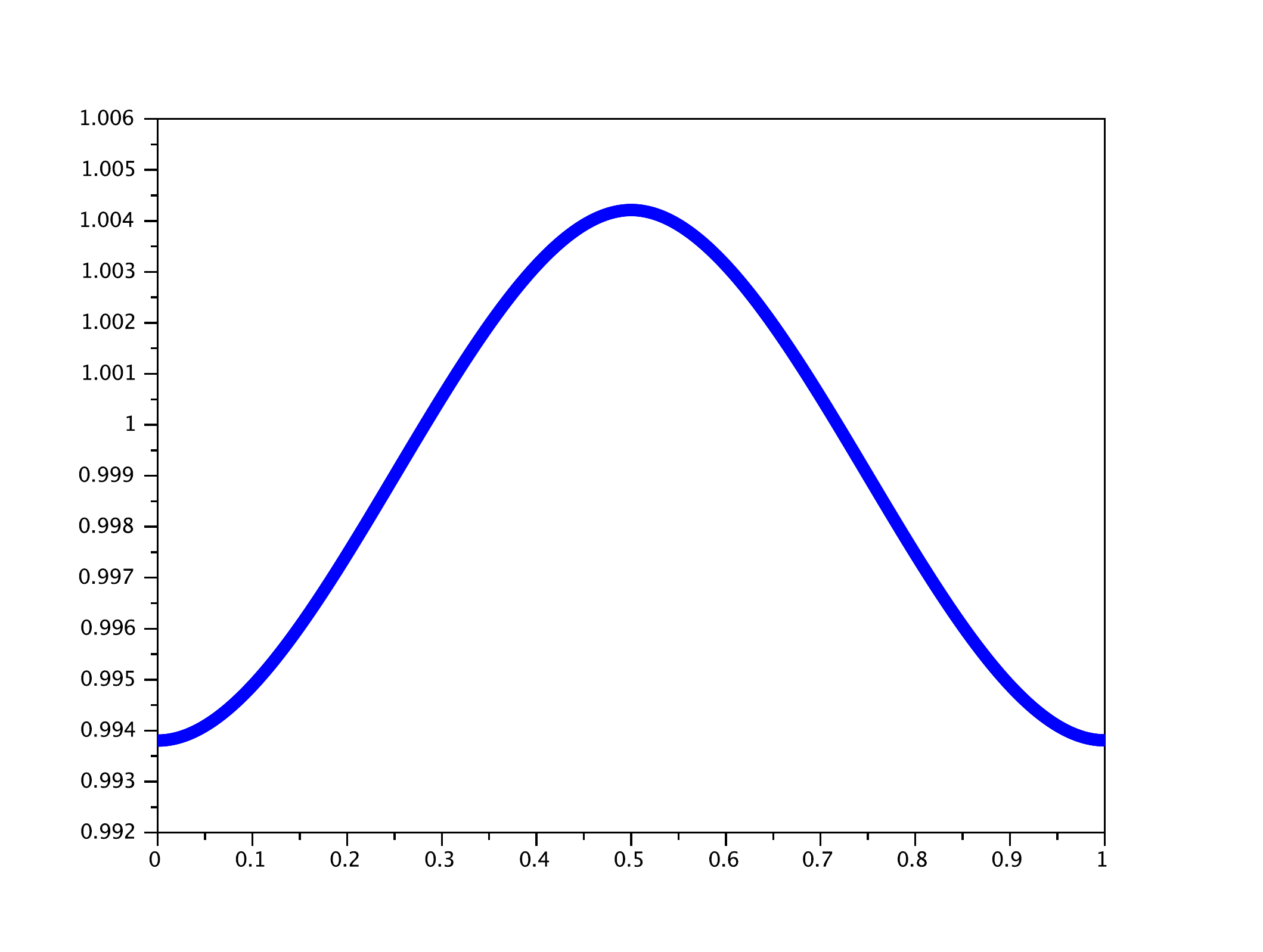} 
& 
\includegraphics[width = 0.3\linewidth]{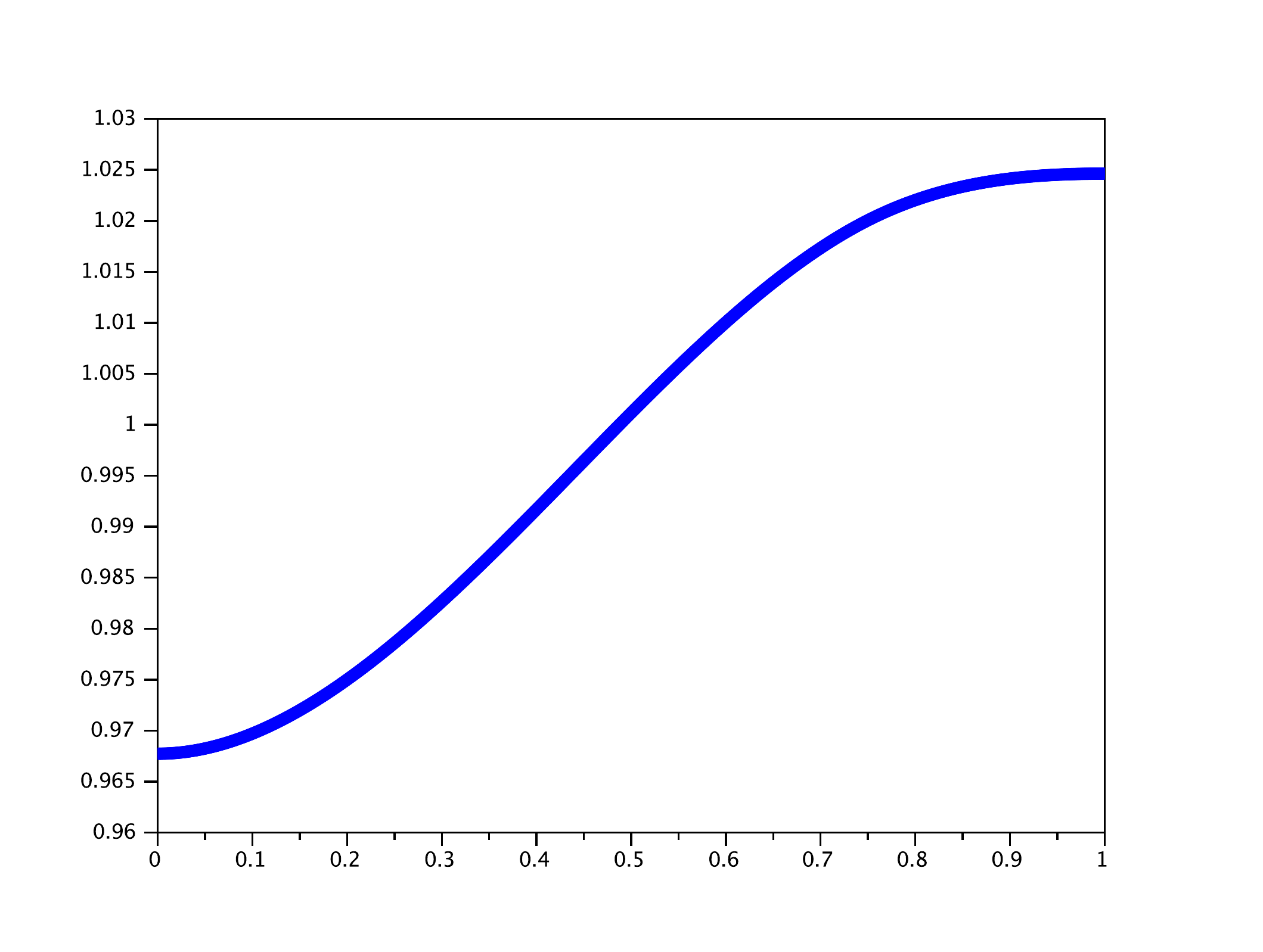}
\\
  \hline
\end{tabular}
\end{center}
\caption{(1): Fitness $a$ (in blue) and principal eigenvalue $H$ (in red); (2): Renormalized principal eigenfunction $Q$.}
\end{table}

\begin{ex}{\bf (A fitness with linear dependence {on} $\theta$).} \label{toads}

This example is taken from  \cite{EB.VC.NM.SM:12}. For $\theta \in \left[ \theta_m , \theta_M \right]$ and $b : \R \to \Theta$ a smooth function, let $a(x,\theta) = \mu \theta - b(x)$. The spectral problem writes, for all $x \in \R$:
\begin{equation*}
\begin{cases}
\alpha \partial_{\theta\theta} Q + r \theta Q = \left( H(x) + r b(x) \right) Q, \medskip\\
\partial_{\theta} Q (x , \theta_m) = \partial_{\theta} Q (x , \theta_M) = 0.
\end{cases}
\end{equation*} 

The solution of this problem is unique up to a multiplicative constant and can be expressed implicitly with special Airy functions. In Table 1, for $\alpha = 1$, $r = 2$ and $\Theta =[0,1]$, we plot the fitness $a(\theta) = \frac{\theta}{2} + \frac{1}{4}$ and the associated eigenvector $Q$. Beware that this example will be used again in Section \ref{num}.

\end{ex}

\begin{ex}{\bf (The maxima of $a$ and $Q$ are not always at the same points.)}

For Example {2}, we consider $a(x,\theta) := 1 - \vert \theta - \theta_m\vert$, for different values of $\theta_m \in \Theta := [0,1]$. The parameters for the simulations are $\alpha = 1, r = 1$. {We observe that although the fitness $a$ attains its maximum at $\theta = \theta_m$, it is not given that the maximum of the eigenfunction Q is attained at $\theta = \theta_m$. In other words, the  trait  with the optimal fitness value does not necessarily correspond to the most represented one.} Indeed, when $\theta_m = \frac12$, the eigenfunction $Q$ is necessarily symmetric with respect to $\theta_m=\frac12$, and hence attains a maximum at this point. By contrast, for $\theta_m = \frac34$, the most represented trait is not the most favorable one {(see Table 1)}: The diffusion through the Neumann boundary condition plays a strong role in this case.  {We observe indeed with this example that, while the fitness $a$ has a {non-symmetric} profile,  the maximum points of $Q$ can be far from the ones of $a$, due to the diffusion term. However, while $\al$ (which equals $1$ in this example) takes values close to $0$, the maximum points of $Q$ approach the ones of  $a$.}

\end{ex}

\begin{ex}{\bf (An example of $a$ and $Q$ with two maximum points)}

For this example, we consider $a(x,\theta) := \varphi_i( \theta )$, for $i=1,2$, and  $\varphi_i$  a quartic function such that two different traits are equivalently favorable in the population. Nevertheless, $Q$ can still take a single maximum on a different point. First, we consider the following symmetric fitness function:
\begin{equation*}
\varphi_1 (\theta):= 200 \left( \theta - \frac15 \right)\left( \theta - \frac25 \right)\left( \theta - \frac35 \right)\left( \theta - \frac45 \right),
\end{equation*}
which has two maxima but all traits between the two maxima are also likely to survive. It turns out that the mutation plays a strong role and creates a single peak in $Q$, which is necessarily $\frac12$ by symmetry. In the second case, we consider the fitness function
\begin{equation*}
\varphi_2 (\theta):= 100 \left( \theta - \frac19 \right)\left( \theta - \frac13 \right)\left( \theta - \frac23 \right)\left( \theta - \frac89 \right),
\end{equation*}
which is still symmetric with respect to the center of $\Theta$. However, since there is a gap between the two traits with the most optimal fitness value, the eigenfunction $Q$ has also two peaks but at different points. See Table 2 for the different plots ($\alpha = 1, r = 1$).
\end{ex}
\begin{table}[h]
\begin{center}
\begin{tabular}{| l | c | c | }
\hline
   & {Exemple 3 ($\varphi_1$)} & {Exemple 3 ($\varphi_2$)} \\
  \hline
  (1) 
& 
\includegraphics[width = 0.3\linewidth]{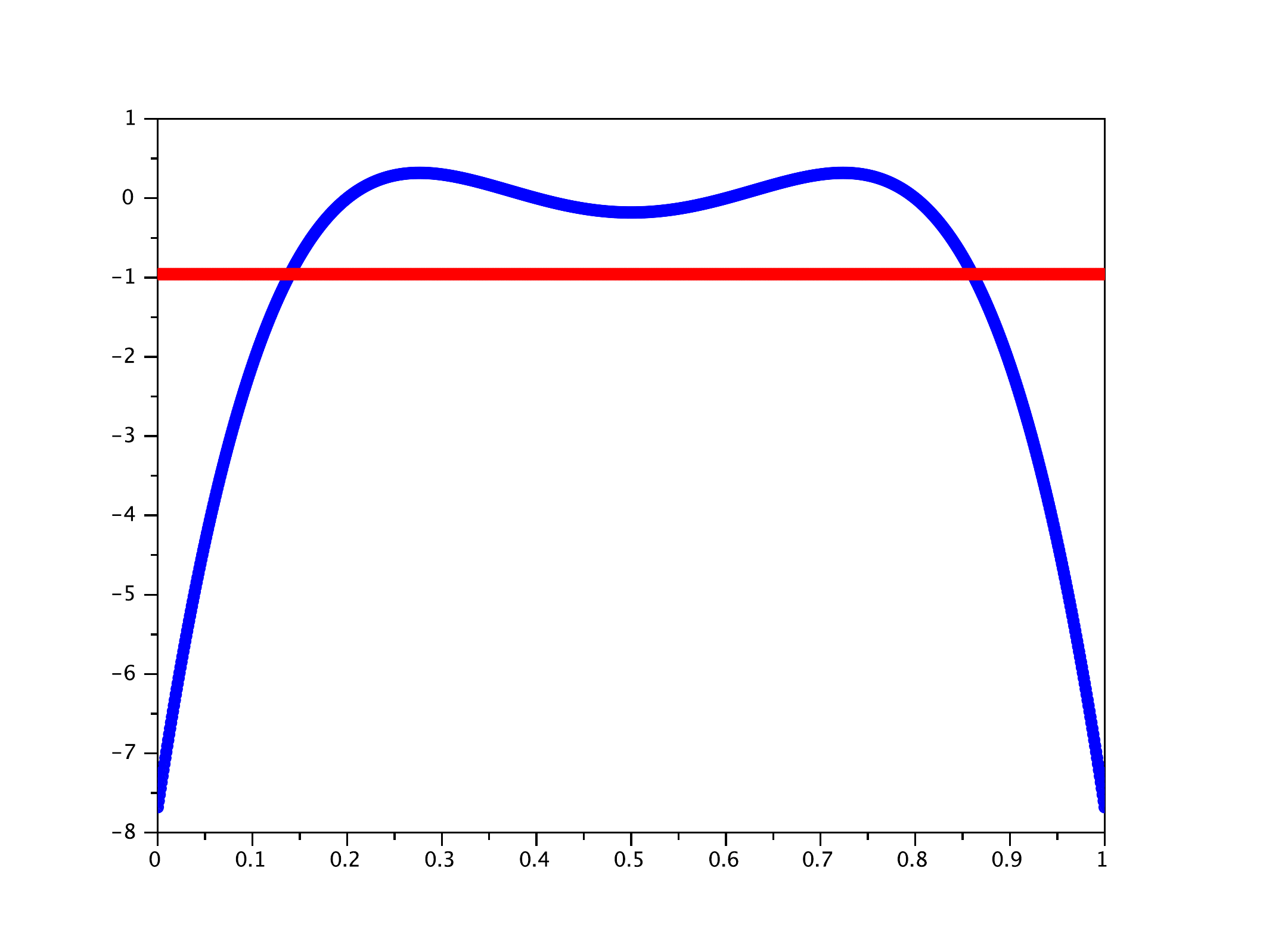}
& 
\includegraphics[width = 0.3\linewidth]{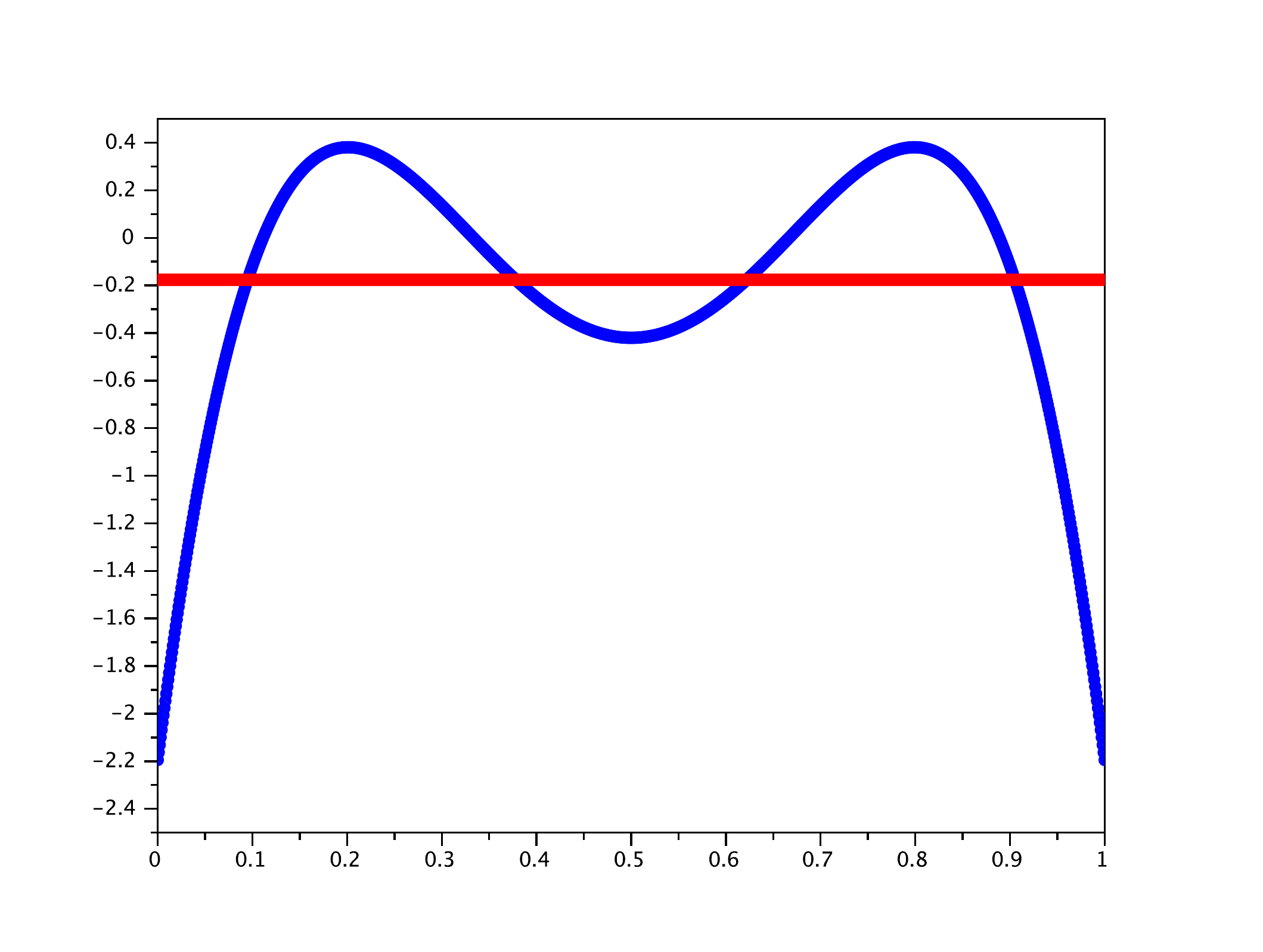} \\
  \hline
  (2) 
& \includegraphics[width = 0.3\linewidth]{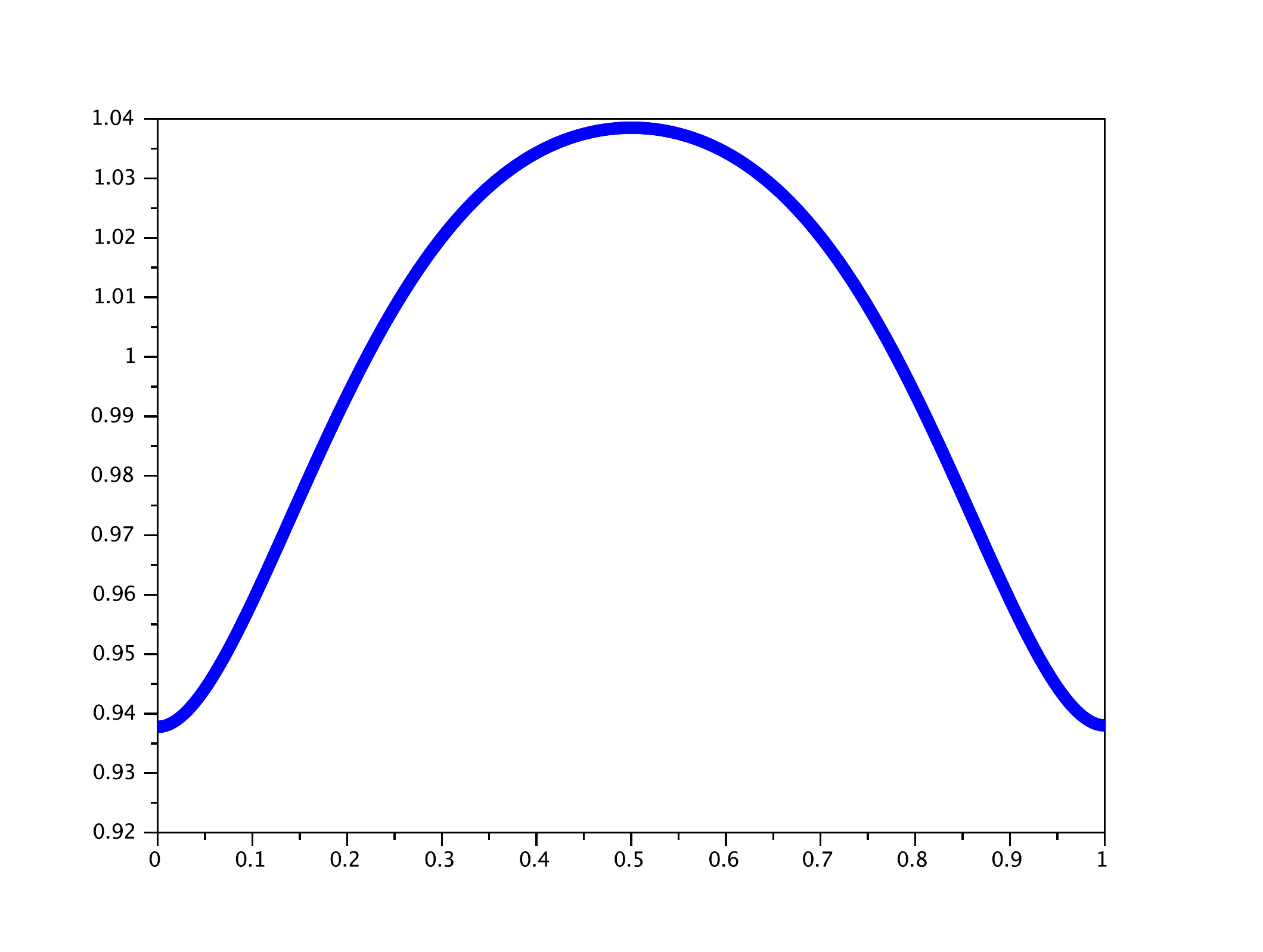}
& \includegraphics[width = 0.3\linewidth]{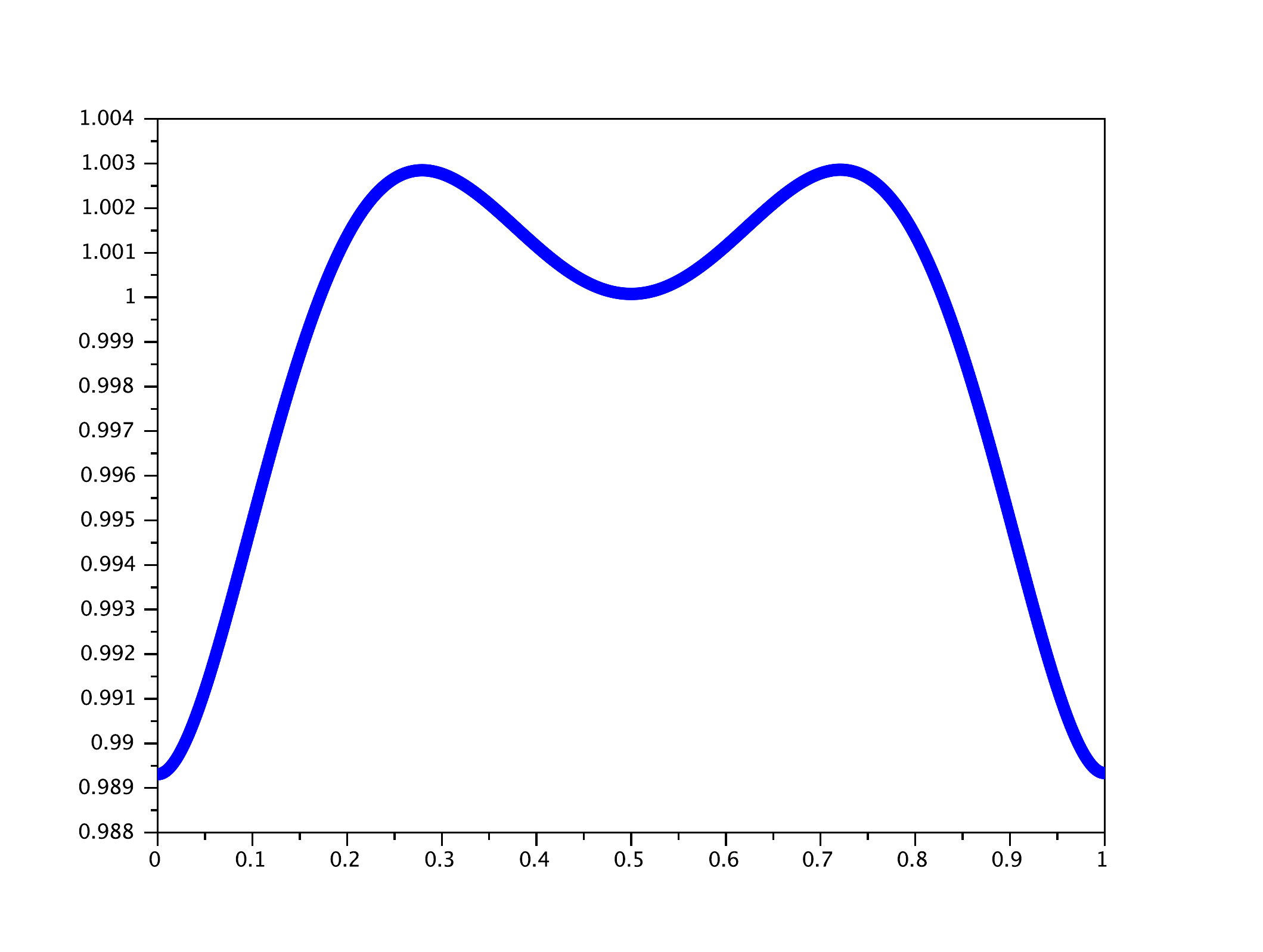}
 \\
  \hline
\end{tabular}
\end{center}
\caption{Fitness $a$ (in blue) and principal eigenvalue $H$ (in red); (2): Renormalized principal eigenfunction $Q$.}
\end{table}

\begin{ex}[{\bf An example with $\Theta$ unbounded}]\label{Schro}

Although not within the framework of this article, we expect that under coercivity conditions on $-a$, Theorem \ref{HJlim} would be still true with an unbounded domain $\Theta$, and in particular for $\Theta=\R^d$. Here, we give an example with $\Theta=\R$ for which  it is easy to compute the eigenelements $Q$ and $H$. We consider $a\left( x, \theta \right) := a_{\infty} - \frac{b_{\infty}}{2}\left( \theta - b(x) \right)^2$ where $b: \R \to \R$ is a smooth function. We can then compute:
\begin{equation*}
Q(x,\theta) =\exp\left( - \frac12 \sqrt{ \frac{r b_{\infty} }{2 \alpha} } \left( \theta - b(x) \right)^2 \right), \qquad H(x) = r a_{\infty} - \sqrt{ \frac{r \alpha b_{\infty} }{2} }.
\end{equation*}
These solutions are not valid in a bounded domain since they do not satisfy the Neumann boundary conditions. \\

We note that, with these parameters, the left inequality in \fer{H2} is strict, i.e. $H(x) < ra(x, \bar \theta (x) )=ra_\infty$,
but as $b_{\infty} \to 0$, which corresponds to the limit case where all the traits are equally favorable, we have $H(x) \to r a_{\infty}$. Finally, it is interesting to notice here that to ensure front expansion, \textit{i.e.} $H(x) \geq 0$, the fitness must satisfy the following additional condition $\frac{2 a_{\infty}^2 }{b_{\infty}} \geq \frac{\alpha}{r}$. 


\end{ex}

\subsection{Numerical illustrations on the evolution of the front}\label{num}

In this section, we resolve numerically the evolution problem \fer{main2} for three different values of $a$. For all the examples, we choose the following initial data
\begin{equation}
\label{iniex}
\begin{cases}
n^\eps(0,x ,\theta) = \max \left( 1 , 2 - 8 \left( \theta - \frac12 \right)^2 \right)& \text{if $x \in \left[0,0.5\right]$}, \theta \in \Theta := [0,1],\\
n^\eps(0,x ,\theta) =0&\text{otherwise}.
\end{cases}
\end{equation}
and use the following parameters
\begin{equation}
\label{par}
\alpha = 1, \qquad r = 2, \qquad D=1,\qquad  \eps = 0.1. 
\end{equation}
The numerical simulations have been performed in Matlab. We gather our results in Figures 1 - 2 - 3. 
For the three different fitness functions, we plot, from left to right:
\begin{itemize}
\item[(+)] The density $n^\e(t,x,\theta)$ for a given final time $t=T$, 
\item[(+)] The value of $\rho^\eps(x)$ at this same final time (blue line), that we compare to the value of $\max \left( \frac{H(x)}{r} , 0 \right)$ (red line), 
\item[(+)] The renormalized trait distributions at the edge of the front (red square-shaped line) and at the back (blue star-shaped line) that we compare to the expected renormalized eigenfunctions $Q$ at the same space positions (pink {circle-shaped} lines).
\end{itemize}
The fitness functions used in the three figures, are respectively
$$a_1(\theta) =  \frac{1}{4} + \frac{\theta}{2},$$
$$a_2(x,\theta) = a_1(\theta) + \left( \sin(x) - \frac12 \right),$$
and
$$a_3(x,\theta) = a_1(\theta) \left( 1 + \frac{1}{1 + 0.05\, x^2} \right).$$
For the three examples, we observe propagation in the $x$--direction as expected according to Theorem \ref{th:main}. We also notice that, in the zones where the front has arrived, i.e. in the set $\mathrm{Int}\{u=0\}$, $\rho^\e$ converges to $\max(\frac{H}{r},0)$. Moreover, for $\e$ small, the renormalized trait distribution of the population at position $x$, i.e. $\f{n_\e(t,x,\cdot)}{\int n_\e(t,x,\theta'd\theta')}$, is close to $Q(x,\cdot)$. These properties have been proved theoretically for a particular case in Proposition \ref{convvarsep}. We also notice that the convergence of the averaged density $\rho^\e$ seems to be faster than the convergence of the density $n^\eps$. \\

In Figure 2, we illustrate an example where $H$ is periodic in $x$ and it can take negative values. This corresponds to a case where the population faces some obstacles, i.e. zones where the conditions are not favorable for the population to persist. However, according to the numerical illustrations, the population manages to pass through the obstacles and reach the favorable zones where it can grow up again. Indeed, even if asymptotically as $\e\to 0$ the density $n_\e$ goes to $0$ in these harsh zones, in the $\e$--level, $n_\e$ is positive but exponentially small. This small density can reach the better zones and grow up.  See Figures 1, 2 and 3 for detailed comments.

%

\begin{figure}[h]\label{Numconveps}
\begin{center}

\includegraphics[width = 0.32\linewidth]{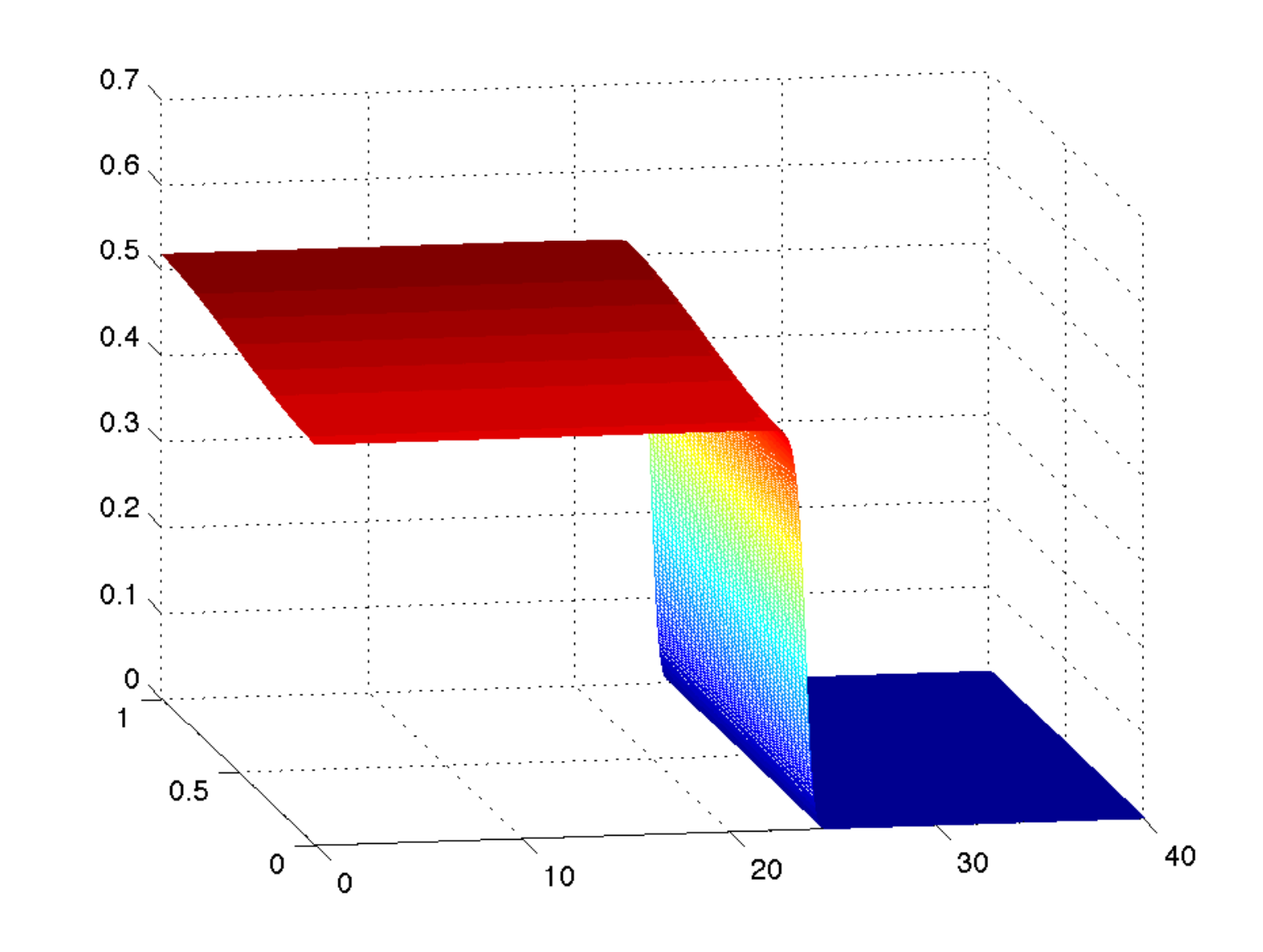}
\includegraphics[width = 0.32\linewidth]{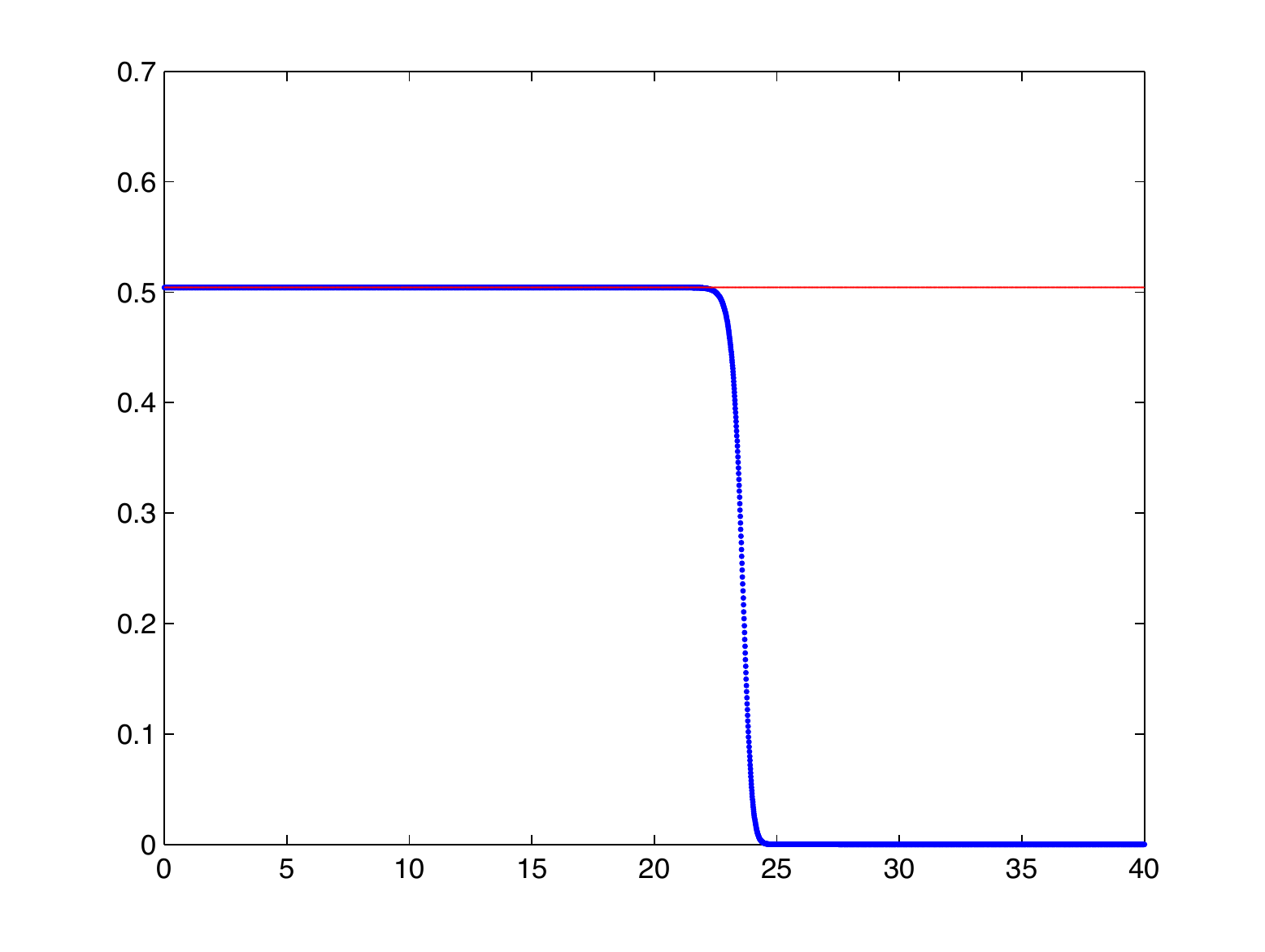}
\includegraphics[width = 0.32\linewidth]{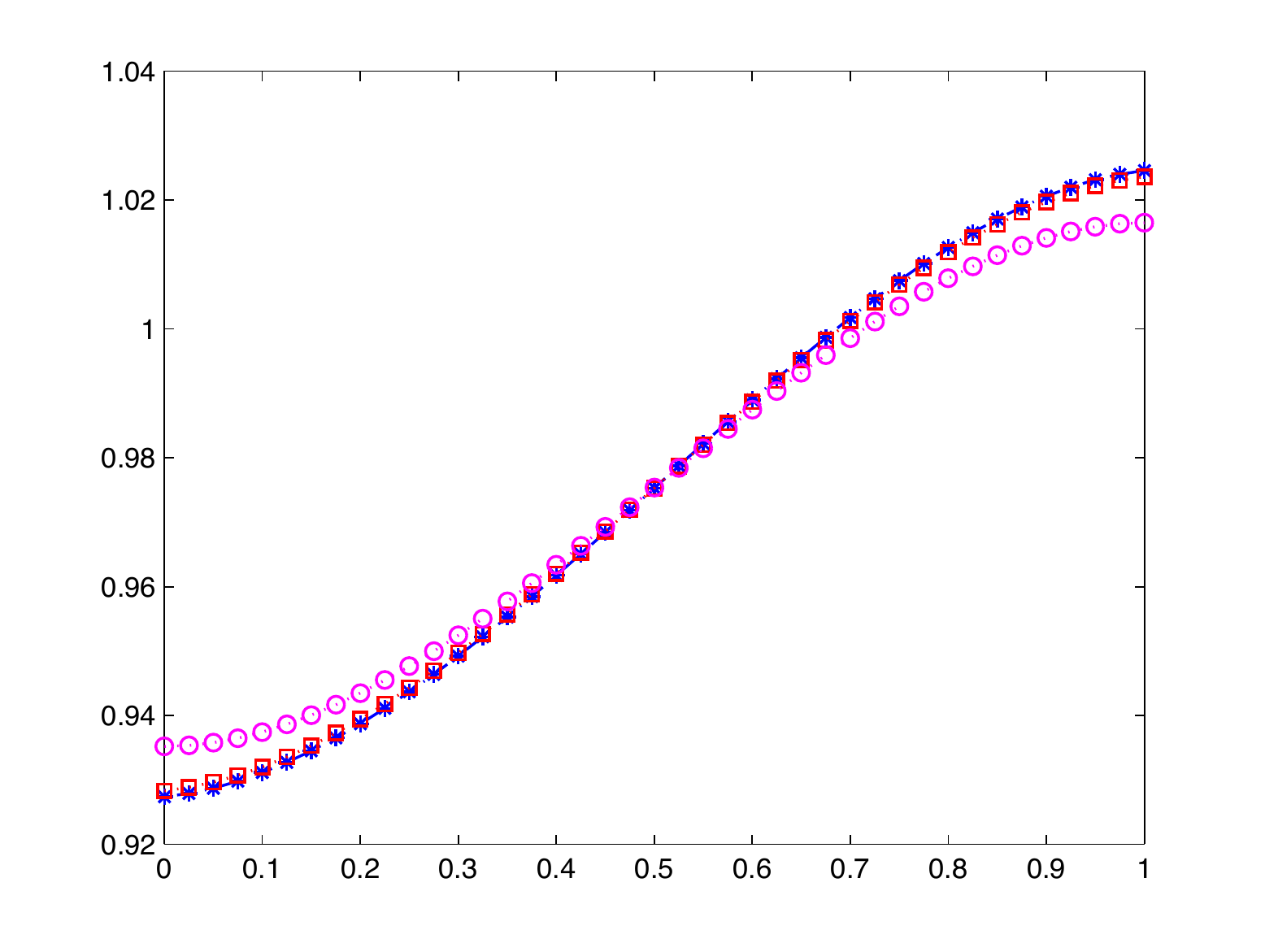}

\caption{We present numerical resolution of \eqref{main2} with the fitness $a_1(\theta) =  \frac{1}{4} + \frac{\theta}{2}$ and using the initial data and the parameters  given by \fer{iniex} and \fer{par}. In this case, as expected, $\rho^\e$ converges to $\frac{H}{r}$ in the zone where the front has arrived: in the set $\{u=0\}$. We also observe that the renormalized trait distribution at the edge (red square-shaped line) and the back of the front (blue star-shaped line), are close to the principal eigenfunction $Q$ (pink circle-shaped line), noting that $Q$ here does not depend on $x$. These results are in accordance with Proposition \ref{convvarsep}.
}
\end{center}
\end{figure}

\begin{figure}[h]\label{Numconveps2}
\begin{center}

\includegraphics[width = 0.32\linewidth]{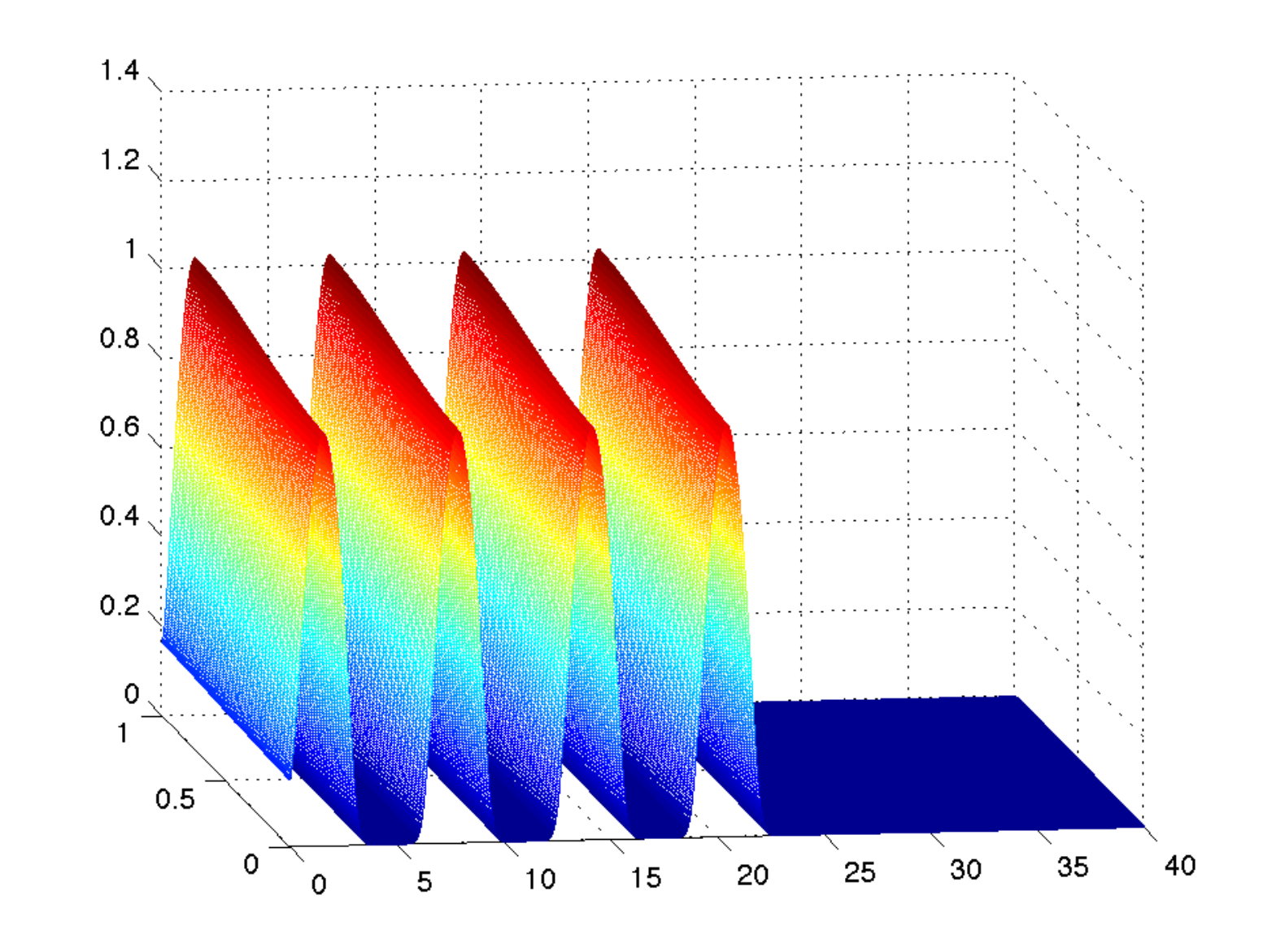}
\includegraphics[width = 0.32\linewidth]{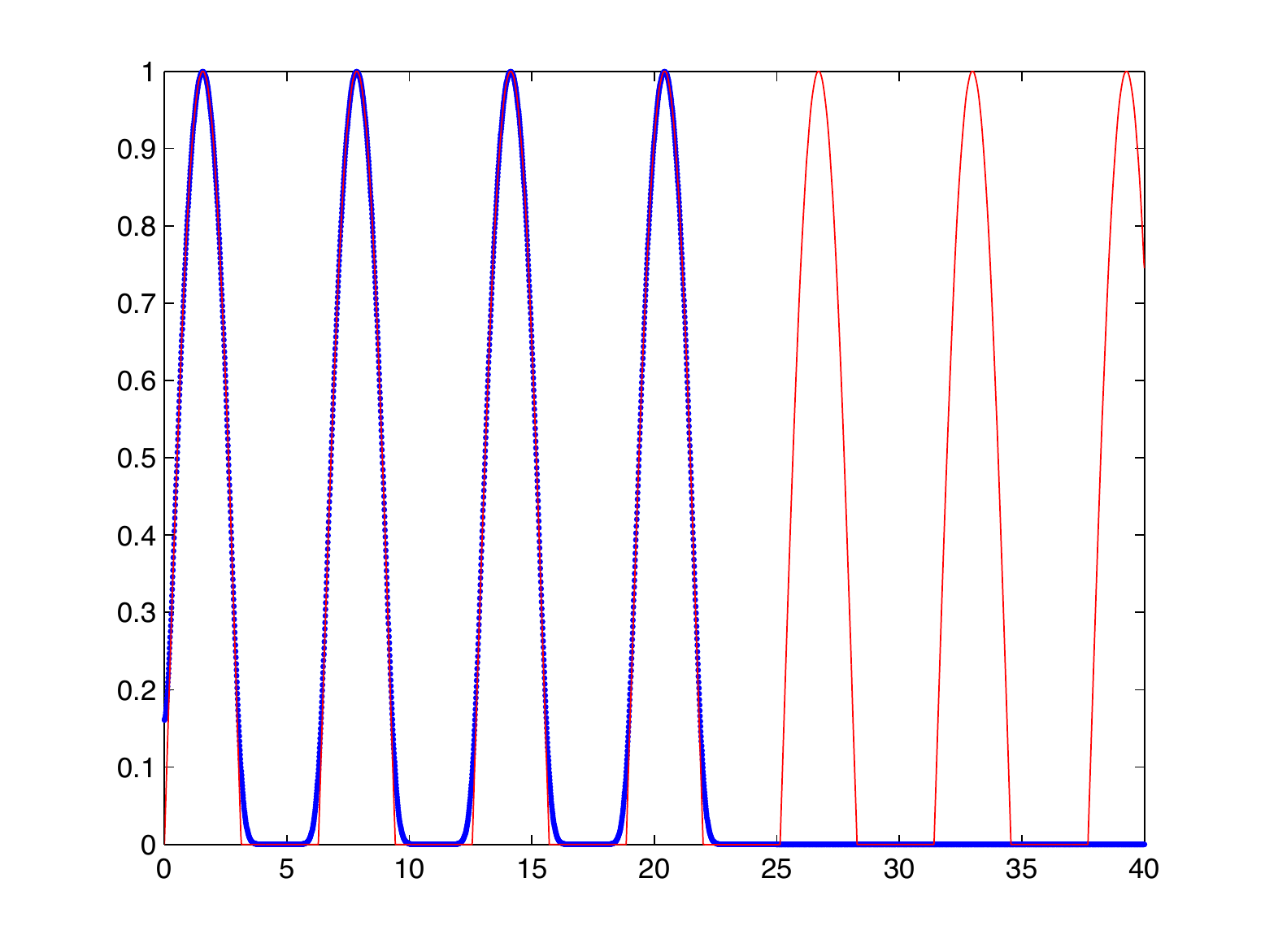}
\includegraphics[width = 0.32\linewidth]{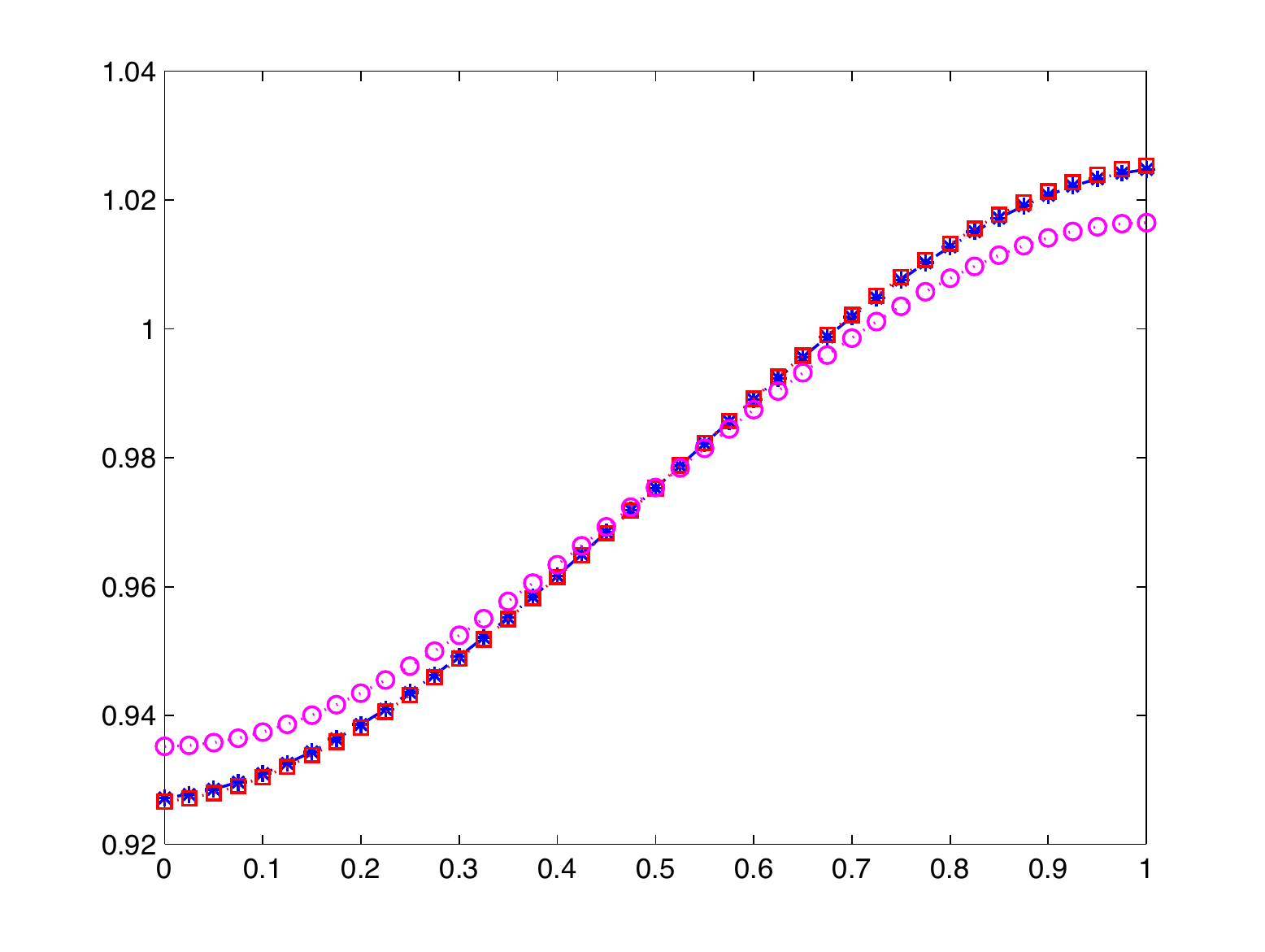}

\caption{This pulsed wave is obtained from the numerical resolution of  \fer{main2} with the fitness $a_2(x,\theta) = a_1(\theta) + \left( \sin(x) - \frac12 \right)$ and using the initial data and the parameters  given by \fer{iniex} and \fer{par}. The same conclusions as for the fitness $a_1$ hold.  Noticing that $H$ can take negative values in some zones which are unfavorable for the population, we observe that the population can pass through the obstacles and grow up in the favorable zones.}
\end{center}
\end{figure}

\begin{figure}[h]\label{Numconveps3}
\begin{center}
\includegraphics[width = 0.32\linewidth]{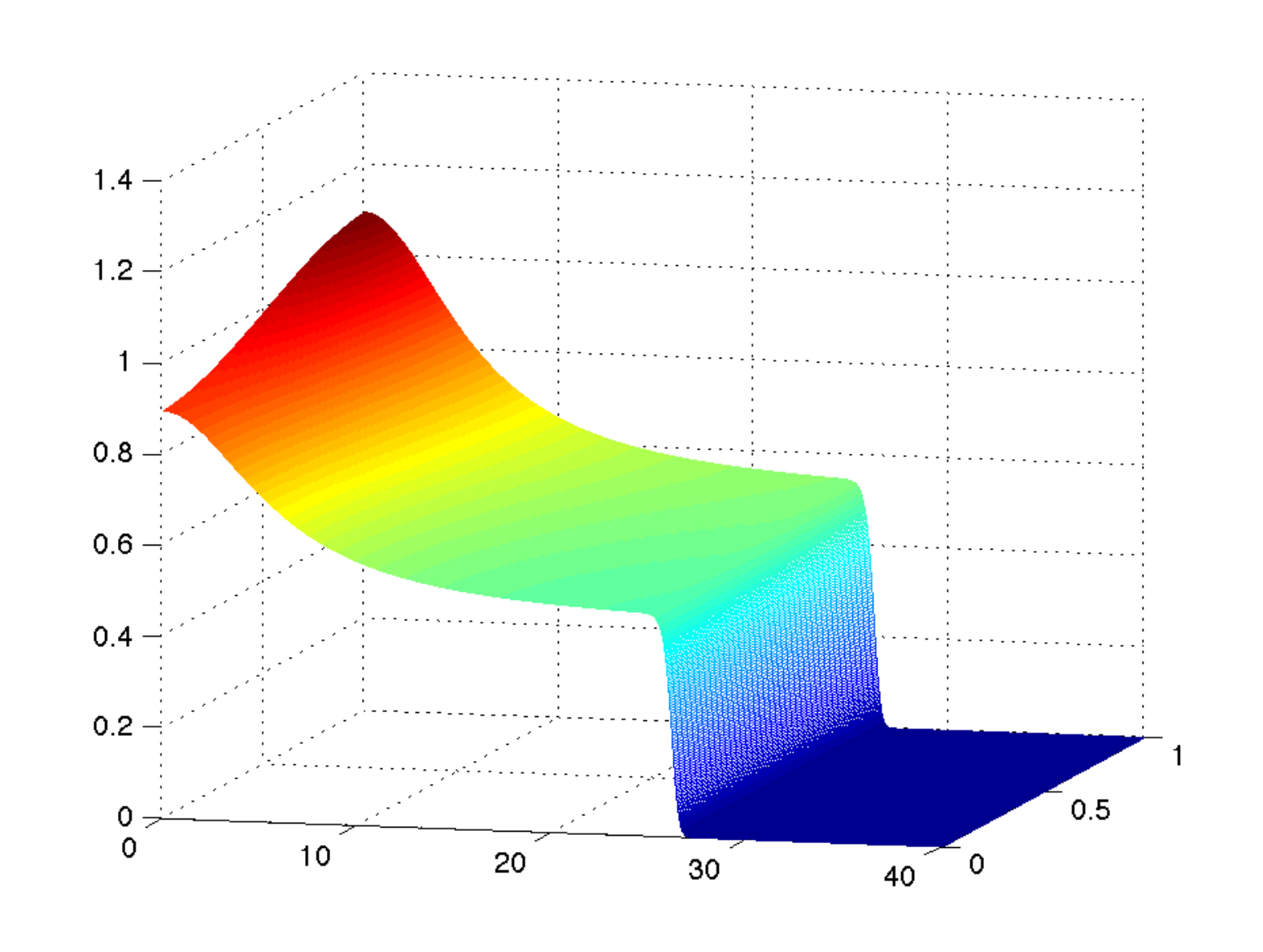}
\includegraphics[width = 0.32\linewidth]{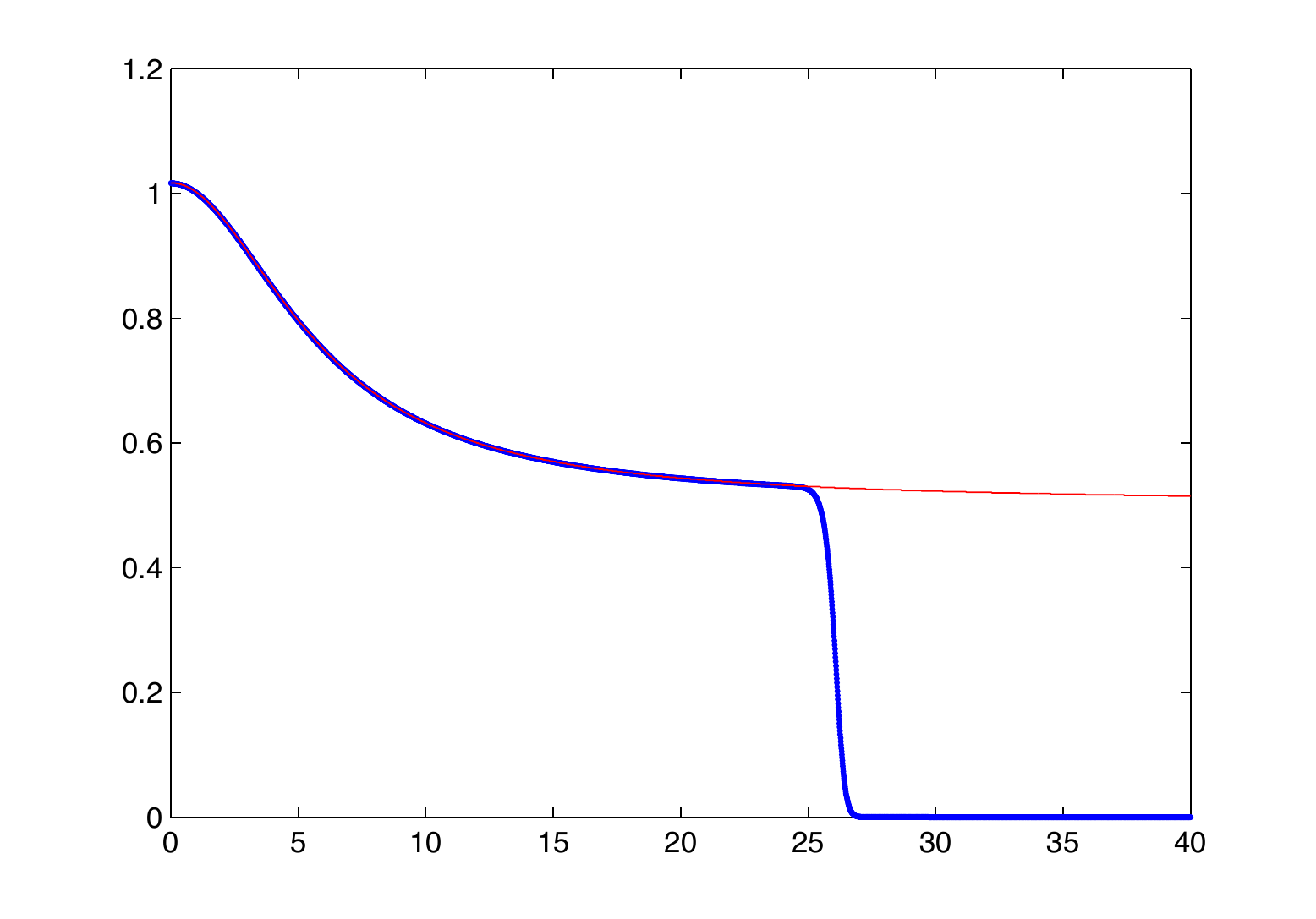}
\includegraphics[width = 0.32\linewidth]{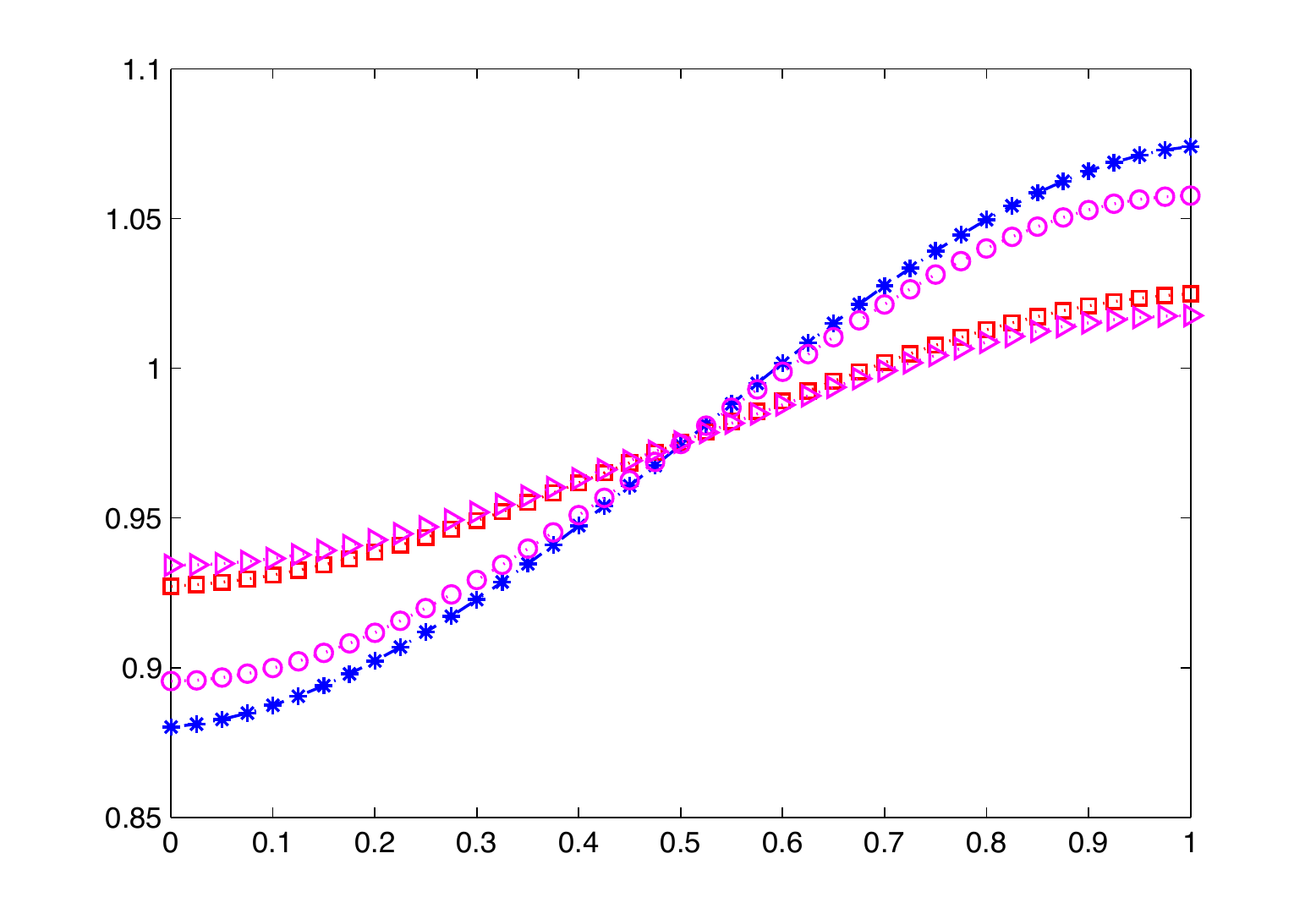}
\caption{We present numerical resolution of \eqref{main2} with the fitness $a_3(x,\theta) = a_1(\theta) \left( 1 + \frac{1}{1 + 0.05 x^2} \right)$ and using the initial data and the parameters  given by \fer{iniex} and \fer{par}. In this case, we have numerically obtained the Hamiltonian $H$, which depends nontrivially on the fitness $a_3$. We find again that the density $\rho_\eps$ converges towards $\frac{H(x)}{r}$. Finally, we also observe an error of  ${O}(\eps)$  between the renormalized trait distributions at the edge and the back of the front with the corresponding eigenfunctions $Q(x,\cdot)$.  }
\end{center}
\end{figure}

\section*{Acknowledgment}

S. M. wishes to thank Ga\"el Raoul for early discussions and computations on this problem.

\bibliography{bibli}
\bibliographystyle{plain}

%

\end{document}